\documentclass[reqno,oneside,12pt]{amsart}
\usepackage{
  amsmath,
  amssymb,
  amsthm,
  geometry,
  paralist,
  thmtools,
  tikz,
  tikz-cd,
  verbatim,
  relsize, %mathlarger
  bbm, %blackboard numbers
  dsfont,
  mathtools
}

\usepackage[T1]{fontenc}

\usepackage{caption}
\usepackage{subcaption}
\usetikzlibrary{angles}

\renewcommand\labelenumi{(\roman{enumi})}
\renewcommand\theenumi\labelenumi

\RequirePackage{ifpdf}
\ifpdf
  \IfFileExists{pdfsync.sty}{\RequirePackage{pdfsync}}{}
  \RequirePackage[pdftex,
   colorlinks = true,
   urlcolor = myblue, % \href{...}{...} external (URL)
   citecolor = mygreen, % \cite{...}
   linkcolor = myred, % \ref{...} and \pageref{...}
   bookmarksopen=true,
   unicode, psdextra]{hyperref}
\else
  \RequirePackage[hypertex, unicode, psdextra]{hyperref}
\fi
\hypersetup{
    colorlinks=true,
    linkcolor=blue,
    filecolor=magenta,      
    urlcolor=cyan,
    citecolor = blue
} 

\RequirePackage{ae, aecompl, aeguill} % to have pretty pdf

\usepackage{quiver}

\usepackage{cleveref} %cref 

\usepackage{adjustbox}

\tikzcdset{
	arrows = crossing over
}

\newtheorem{thmx}{Theorem}

\usepackage{xurl}

\usepackage{dynkin-diagrams}

\usepackage[boxsize=.5em]{ytableau}

\usetikzlibrary{arrows.meta} % not arrows, but arrows.meta

\usepackage[textsize=scriptsize]{todonotes}
\presetkeys{todonotes}{inline}{}

\makeatletter
\providecommand\@dotsep{5}
\makeatother

\newcommand{\N}{\mathbb N}

\newcommand{\R}{\mathbb R}

\newcommand{\Z}{\mathbb Z}

\newcommand{\e}{\text{even}}

\newcommand{\<}{\langle}
\renewcommand{\>}{\rangle}

\DeclareMathOperator{\id}{id}

\newcommand{\indexset}{\mathcal{S}}

\DeclareMathOperator{\End}{End}

\DeclareMathOperator{\FP}{FP}
\DeclareMathOperator{\Span}{Span}
\DeclareMathOperator{\Fib}{Fib}
\DeclareMathOperator{\Mat}{Mat}
\DeclareMathOperator{\Conv}

\DeclarePairedDelimiter\floor{\lfloor}{\rfloor}

\declaretheorem[numberwithin=section]{Theorem}
\declaretheorem[sibling=Theorem]{Lemma}
\declaretheorem[sibling=Theorem]{Corollary}

\declaretheorem[sibling=Theorem]{Proposition}

\declaretheorem[sibling=Theorem, style=remark]{Remark}
\declaretheorem[sibling=Theorem, style=definition]{Definition}
\declaretheorem[sibling=Theorem, style=definition]{Example}

\newtheorem*{theorem*}{Theorem}

\crefname{lemma}{Lemma}{Lemma}
  \crefname{corollary}{Corollary}{Corollary}
  \crefname{theorem}{Theorem}{Theorem}
  \crefname{definition}{Definition}{Definition}
   \crefname{proposition}{Proposition}{Proposition}
\crefname{question}{Question}{Question}
 \crefname{section}{Section}{Section} 
   \crefname{construction}{Construction}{Construction}
   \crefname{generalization}{Generalization}{Generalization}
  \crefname{construction}{Construction}{Construction}
  \crefname{notation}{Notation}{Notation}
   \crefname{example}{Example}{Example}
  \crefname{remark}{Remark}{Remark}
  \crefname{fact}{Fact}{Fact}
  \crefname{conjecture}{Conjecture}{Conjecture}
  \crefname{motivation}{Motivation}{Motivation}  
  \crefname{figure}{Figure}{Figure}  
  \crefname{assumption}{Assumption}{Assumption}

\author[]{Max Boyle}
\address{M.\ Boyle: School of Mathematics and Statistics, University of Sydney, Australia}
\email{mboy9010@uni.sydney.edu.au}

\author[]{Edmund Heng}
\address{E.\ Heng: School of Mathematics and Statistics, University of Sydney, Australia}
\email{edmund.heng@sydney.edu.au}

\title[Coxeter planes and Verlinde rings]{Coxeter planes as fixed points of Verlinde fusion rings}

\begin{document}

\begin{abstract}
	For the Coxeter groups of ADE type, we provide a construction of their Coxeter planes as fixed points of actions of hypergroups associated to Verlinde fusion rings. This builds upon the well-known ADE classification of $\Z_+$-modules over these fusion rings.
\end{abstract}

\maketitle          % creates the title page

\section{Introduction} 
One of the important invariants associated to an irreducible finite Coxeter group $W$ is its Coxeter number $h$, which can be defined as the order of any Coxeter element (the product of all generators in some particular order).
Through the faithful geometric representation $V$ of $W$, there is a (real) two-dimensional plane $P \subseteq V$ on which a distinguished Coxeter element acts as a rotation of order $h$; this plane $P$ is known as the \emph{Coxeter plane} (see \cref{sec:Cox-plane}).
The study of the Coxeter plane was first formalised by Steinberg in 1959 \cite{MR106428}.
Since then, there are several methods to construct the Coxeter plane, e.g.\ via Springer theory \cite{MR354894}.

For the Coxeter groups corresponding to type $A_3$ and type $D_4$, the Coxeter planes can also be constructed via a ``folding'' method.
Namely, the $A_3$ (resp.\ $D_4$) Coxeter graph carries a $\Z/2\Z$ (resp.\ $\Z/3\Z$) symmetry.
This induces an action on $V$, and it turns out that the Coxeter plane agrees with the subspace of fixed points of this action.
However, there are no other ``foldings'' that one can do for other types -- at least not using groups.

In this paper, we show that one can still ``fold'' to obtain Coxeter planes for all ADE types, as long as we are willing to generalise from groups to \emph{hypergroups}.
Roughly speaking, hypergroups are a generalisation of groups where elements multiply into a probabilistic spread of elements (instead of a unique element); the precise definition is in \cref{def:HG}.
Our main theorem can be stated as follows.
\begin{thmx}[=\cref{thm:main-theorem}]\label{thm:0}
    Let $\Gamma$ be an ADE Dynkin diagram (see \cref{fig:CoxDyn}) and let $V_\Gamma$ be the associated geometric representation of the Coxeter group $W$. There exists a hypergroup $\mathcal{H}_\Gamma$ acting on $V_\Gamma$ such that the subspace of fixed points of $\mathcal{H}_\Gamma$ is equal to the Coxeter plane.
\end{thmx}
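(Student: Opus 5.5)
The plan is to take $\mathcal{H}_\Gamma$ from the Verlinde fusion ring $K_h$ of $SU(2)$ at level $h-2$, where $h$ is the Coxeter number of $\Gamma$, acting through the ADE $\Z_+$-module it carries. Concretely, $K_h$ has basis $[0],[1],\dots,[h-2]$ with the truncated Clebsch--Gordan rule
\[
[1][n]=[n-1]+[n+1], \qquad [h-1]:=0 .
\]
By the classical classification, the vertices of $\Gamma$ index the basis of a $\Z_+$-module $M_\Gamma$ over $K_h$, and $[1]$ acts by the adjacency matrix $A_\Gamma$. Hence $[n]$ acts by $U_n(A_\Gamma/2)$, a Chebyshev polynomial of the second kind, and each $[n]$ acts by a nonnegative integer matrix.

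Next I would normalise to get a hypergroup. Let $d_n=\sin((n+1)\pi/h)/\sin(\pi/h)$ be the Frobenius--Perron dimensions, and set $b_n=[n]/d_n$. The structure constants of the $b_n$ are nonnegative and sum to $1$, which gives a hypergroup $\mathcal{H}_\Gamma=\{b_0,\dots,b_{h-2}\}$ with the trivial involution. I would let it act on $V_\Gamma=\R^{\Gamma_0}$ by $b_n\mapsto \frac{1}{d_n}U_n(A_\Gamma/2)$, viewing the simple roots as the vertex basis. This is an action precisely because $M_\Gamma$ is a module. The fixed subspace is
\[
F=\{v : b_n v=v \text{ for all } n\}.
\]
Since every $[n]$ is a polynomial in $[1]$, we get $F=\ker(A_\Gamma-d_1)$, with $d_1=2\cos(\pi/h)$; the converse holds because $U_n(\cos\theta)=\sin((n+1)\theta)/\sin\theta$.

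Next I need $\dim F=2$ for the action on $V_\Gamma$ as a real representation. Here a subtlety appears. $A_\Gamma$ is symmetric with simple Perron eigenvalue $2\cos(\pi/h)$, so a fixed space defined by this single eigenvalue is only one-dimensional. To get a plane I would therefore use the bipartite structure. Write $A_\Gamma=\begin{pmatrix}0&B\\B^T&0\end{pmatrix}$ with respect to the two-colouring, and take the relevant module to be over $K_h\otimes\Z[\Z/2]$, or equivalently use the \emph{even part}. I expect the paper's hypergroup to involve the even subring spanned by the $[2k]$, acting separately on the black and white vertices. The fixed space is then $\{(x,y): BB^Tx=d_1^2x,\ B^TBy=d_1^2y\}$, which is two-dimensional, spanned by the Perron vectors $(x_0,0)$ and $(0,y_0)$. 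I would verify the fixed-point computation in this form, using that $[2]=[1]^2-1$ generates the even part.

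Finally, I would identify this plane with Steinberg's Coxeter plane. Take the bipartite Coxeter element $c=s_Bs_W$, where $s_B$ and $s_W$ are the products of reflections over each colour class. In the simple-root basis, $s_B$ and $s_W$ act via the blocks $-1$ and $B$. Steinberg's description (\cref{sec:Cox-plane}) gives the Coxeter plane as the span of $(x_0,0)$ and $(0,y_0)$, where these are Perron eigenvectors of $BB^T$ and $B^TB$ with eigenvalue $4\cos^2(\pi/h)$. I would check this directly: the span of these two vectors is stable under $s_B$ and $s_W$, and $c$ acts on it as rotation by $2\pi/h$.

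The main obstacle is setting up the hypergroup so that its fixed points are exactly two-dimensional rather than one. This means choosing the correct (even or graded) hypergroup and its action on $V_\Gamma$, and checking that the normalised structure constants are nonnegative and sum to one. Once that is in place, the identification with the Coxeter plane is a Perron--Frobenius computation.
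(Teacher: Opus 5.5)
Your proposal is correct and is essentially the paper's proof: the hypergroup is the even part $R_{h-1}^\e\otimes_\Z\R$ acting on $M_\Gamma\otimes_\Z\R$, and its fixed space is spanned by the restrictions $r^+=(x_0,0)$ and $r^-=(0,y_0)$ of the Perron vector to the two colour classes (the paper gets this from \cref{prop:Z-plus-action-hg} and irreducibility of $M^\pm$ over the even part, where you compute directly with $BB^T$ and $B^TB$), and this plane is the Coxeter plane because $r^+\pm r^-$ are the $\pm 2\cos(\pi/h)$-eigenvectors $u^\pm$ of $A_\Gamma$. The one slip is your aside that working over $K_h\otimes\Z[\Z/2]$ is ``equivalent'' to using the even part: as long as $[1]$ still acts by $A_\Gamma$, the fixed space lies in $\ker(A_\Gamma-d_1)$ and is at most a line, so drop that option and use the even part, as you in fact do.
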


One way that hypergroups arise is through fusion rings (see \cref{prop:Fusion-rings-are-HG}), and our approach is very much motivated by this.
On one hand, fusion rings (and fusion categories) have recently been used to apply more general types of ``folding'' that go beyond the crystallographic types \cite{MR4769244,EH_fusionquiver,HP_fusionarrangement,MR4806405,MR4927767}.
On the other hand, there is a well-known classification of the $\Z_+$-modules of the Verlinde fusion rings (associated to $\mathfrak{sl}_2$ at some level), which also carries an ADE classification \cite{MR1333750,MR1976459}.
Our construction of the hypergroup action in \Cref{thm:0} builds directly upon this classification.
Namely, given an ADE Dynkin diagram $\Gamma$, there is an irreducible $\Z_+$-module $M_\Gamma$ over an appropriate Verlinde fusion ring $R_{h-1}$.
The Verlinde fusion ring has an even part fusion subring $R_{h-1}^\e \subseteq R_{h-1}$, and the action of $R_{h-1}^\e$ on $M_\Gamma$ induces an action of the hypergroup of $R_{h-1}^\e$ acting on $M_\Gamma \otimes_\Z \R = V_\Gamma$.
It is the fixed points of this action that give exactly the Coxeter plane.

A brief outline of the paper is as follows.
In \cref{sec:Coxgroupplane}, we recall the relevant definitions from Coxeter theory and recall one of the methods to construct the Coxeter plane (following \cite{casselman_coxeter_elements}).
\cref{sec:fusionhypergroup} contains the definitions of fusion rings and hypergroups. We also introduce in this section the notion of an action of a hypergroup (on real vector spaces) and its corresponding notion of fixed points.
In \cref{sec:verlindeclassification}, we recall the definition of Verlinde fusion rings and the ADE(T) classification of their $\Z_+$-modules.
Finally we prove our main theorem in \cref{sec:mainthm}.

\subsection*{Acknowledgements}
The authors would like to thank Devon Stockall for introducing them to the notion of hypergroups, which ultimately led to the idea of this paper. We also thank Andrew Riesen for useful discussions on hypergroups.
This work is part of the Honours thesis of M.B.\ \cite{Boyle_hons}.
E.H.\ is supported by the Australian Research Council grant DP230100654.

\section{Preliminaries on Coxeter groups} \label{sec:Coxgroupplane}
\subsection{Coxeter groups and Coxeter elements}
%We collect preliminaries on Coxeter groups in this section and we refer the reader to \cite{MR2133266} for more details.
\begin{Definition}
    Let $\indexset = \{1,2,\ldots,n\}$ be a finite set. A \textbf{Coxeter matrix} is a matrix $m: \indexset\times \indexset\to \{1,2,\,\dots\}\cup\{\infty\}$ such that for all $i,\,j\in \indexset$
    \[m(i,j)=m(j,i)\qquad \text{and}\qquad m(i,j)=1\iff i=j\]
\end{Definition}
Corresponding to a Coxeter matrix $m$ is a Coxeter graph $\Gamma$ defined as follows.
\begin{Definition}
    The \textbf{Coxeter graph} $\Gamma \coloneqq \Gamma(m)$ of a Coxeter matrix $m$ is a simplicial graph with vertex set $\Gamma_0 \coloneq \indexset$ and edges $\{i,\,j\} \in \Gamma_1$ whenever $m(i,j)\geq 3$. When $m(i,j)\geq 4$, we also label the edge $\{i,j\}$ with $m(i,j)$. 
\end{Definition}
%we will use $\Gamma_0 = \{v_1,\ldots,v_n\}$ to denote the set of vertices; and vice versa.

\iffalse
\begin{Example}
    Below is a Coxeter matrix and its corresponding Coxeter graph.
    
\begin{center}
\begin{minipage}{0.4\textwidth}
\centering
$\begin{pmatrix}
        1&5&3&\infty\\
        5&1&2&2\\
        3&2&1&3\\
        \infty&2&3&1
    \end{pmatrix}$
\end{minipage}%
\begin{minipage}{0.1\textwidth}
\centering
$\longleftrightarrow$
\end{minipage}%
\begin{minipage}{0.4\textwidth}
\centering
\begin{tikzpicture}[
  dynkin/.style={
    line width=0.8pt,
    draw=black,
  },
  node/.style={
    circle,
    draw=black,
    fill=black,
    inner sep=0pt,
    minimum size=3pt,
  }
]
  % Nodes
  \node[node, label=left:$v_1$] (a1) at (0,0) {};
  \node[node, label=right:$v_2$] (a2) at (2,1) {};
  \node[node, label=right:$v_3$] (a3) at (2,0) {};
  \node[node, label=right:$v_4$] (a4) at (2,-1) {};
  % Edges
  \draw[dynkin] (a1) -- node[midway, above left=-2pt]{$5$} (a2);
  \draw[dynkin] (a1) --  (a3);
  \draw[dynkin] (a1) -- node[midway, below left=-2pt]{$\infty$} (a4);
  \draw[dynkin] (a3) -- (a4);
\end{tikzpicture}
\end{minipage}
\end{center}
\end{Example}
\fi

\begin{Definition}
    Given a Coxeter matrix (or equivalently a Coxeter graph), the \textbf{Coxeter group} $W$ is defined by the presentation:
    \[
    W \coloneqq \langle s_i, \ i \in \indexset \mid (s_is_j)^{m(i,j)}=e\quad\forall i,j\in \mathcal{S}\text{ such that }m(i,j)<\infty \rangle.
    \]
    Then $S \coloneqq \{s_i \mid i \in \indexset\}$ (in bijection with $\indexset$) is a set of generators for $W$, and we call $(W,S)$ a \textbf{Coxeter system} of rank $n = |S|$.
    A Coxeter system (and its Coxeter group) is said to be \textbf{irreducible} if its Coxeter graph is connected.
\end{Definition}
Note that since $m(i,i)=1$, we have $s_i^2=e$ for all $s_i\in S$, so the set of relations is equivalent to 
\[s_i^2=e, \qquad \underbrace{s_is_js_i\cdots}_{m(i,j)}=\underbrace{s_js_is_j\cdots}_{m(i,j)}.\]
In particular, $s_is_j=s_js_i$ if $m(i,j)=2$, so two generators commute if and only if the associated vertices do not have an edge between them on the Coxeter graph.

The following theorem of Coxeter classifies all finite irreducible Coxeter groups.
\begin{figure}
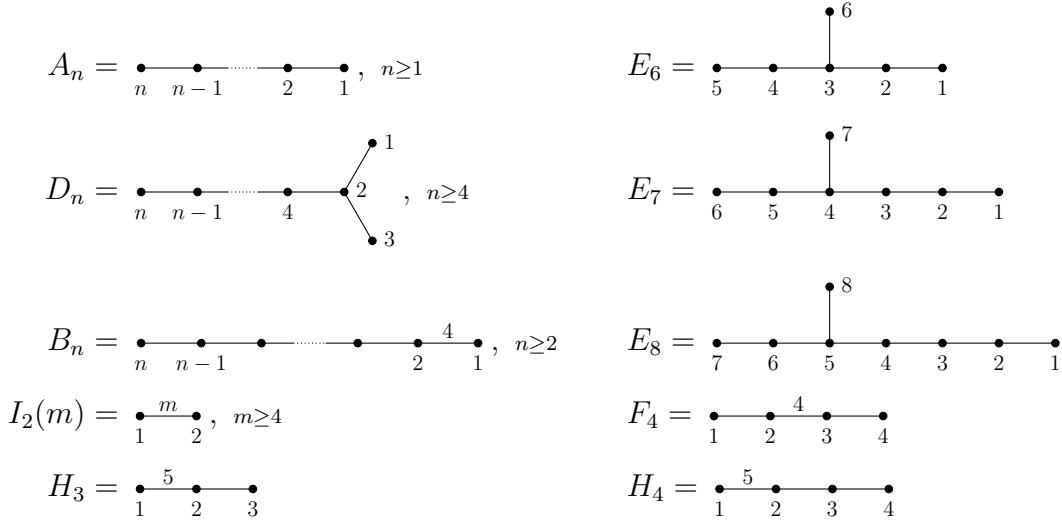

    \centering
    \begin{align*}
    A_n &= \dynkin[edge length=.75cm,labels={n,n-1,2,1},Coxeter]A{}, \ {\scriptstyle n\geq 1} \quad
    &&E_6 = \dynkin [edge length=.75cm,labels={5,6,4,3,2,1},Coxeter]E6
    \\
    D_n &= \dynkin [edge length=.75cm,labels={n,n-1,4,2,1,3},label directions={,,,right,right,right},Coxeter]D{}, \ {\scriptstyle n\geq 4} \quad
    &&E_7 = \dynkin [edge length=.75cm,labels={6,7,5,4,3,2,1},Coxeter]E7
    \\
    B_n &= \dynkin [edge length=.8cm,labels={n,n-1,,,2,1},Coxeter]B{}, \ {\scriptstyle n\geq 2} \quad
    &&E_8 = \dynkin [edge length=.75cm,labels={7,8,6,5,4,3,2,1},Coxeter]E8
    \\
    I_2(m) &= \dynkin [edge length=.75cm,labels={1,2},Coxeter,gonality=m]I{}, \ {\scriptstyle m\geq 4} \quad
    &&F_4 = \dynkin [edge length=.75cm,labels={1,2,3,4},Coxeter]F4
    \\
    H_3 &= \dynkin [edge length=.75cm,labels={1,2,3},Coxeter]H3 \quad
    &&H_4 = \dynkin [edge length=.75cm,labels={1,2,3,4},Coxeter]H4
    \end{align*}
    \caption{Coxeter--Dynkin diagrams classifying the irreducible finite Coxeter groups. The ADE Dynkin diagrams consist of the $A_n$ and $D_n$ families, and the three exceptional types $E_6, E_7$ and $E_8$.}
    \label{fig:CoxDyn}
\end{figure}
\begin{Theorem}[\protect{\cite{MR1581693}}]
    The finite irreducible Coxeter groups are classified by those whose Coxeter graphs are one of the Coxeter--Dynkin diagrams in \cref{fig:CoxDyn}.
\end{Theorem}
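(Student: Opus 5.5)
The plan is to follow the classical route through the geometric representation and the associated symmetric bilinear form. This splits the proof into an analytic step and a combinatorial step. To the Coxeter matrix $m$ I associate the real vector space $V$ with basis $\{\alpha_i\}_{i\in\indexset}$ and the symmetric bilinear form
\[
B(\alpha_i,\alpha_j) = -\cos\bigl(\pi/m(i,j)\bigr),
\]
with the convention $B(\alpha_i,\alpha_j) = -1$ when $m(i,j)=\infty$. The generators act on $V$ by the reflections $s_i(v) = v - 2B(\alpha_i,v)\alpha_i$, and these satisfy the defining relations of $W$. The first goal is the key criterion: for an irreducible Coxeter system, $W$ is finite if and only if $B$ is positive definite.

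For ``finite $\Rightarrow$ positive definite'', I would invoke Tits' theorem that the geometric representation is faithful. Averaging any inner product over the finite group $W$ gives a $W$-invariant inner product. Connectedness of $\Gamma$ makes $V$ irreducible when $B$ is nondegenerate. A Schur-type argument then shows that $B$ is a positive multiple of this invariant inner product. The degenerate case is excluded separately: the radical of $B$ is then a proper invariant subspace, and a finite group acting by reflections would have to split it off, which contradicts how the $\alpha_i$ pair against it.

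For ``positive definite $\Rightarrow$ finite'', $W$ sits inside the compact group $O(V,B)$, so it suffices to show that $W$ is discrete there. For this I would use the Tits cone: the fundamental chamber $C=\{v : B(\alpha_i,v)>0\ \forall i\}$ is open, and its $W$-translates are pairwise disjoint. Hence any element of $W$ close enough to the identity maps a point of $C$ back into $C$, and so is trivial. I expect this step, discreteness via the Tits cone together with faithfulness, to be the main technical obstacle. It requires the length/root-sign argument that every root is either positive or negative.

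The remaining step is combinatorial: classify the connected Coxeter graphs with positive definite $B$. I would use two facts.
\begin{enumerate}
\item Every full subgraph of a positive definite graph is positive definite.
\item Decreasing an edge label keeps the form positive definite. This follows by comparing $\sum |c_i||c_j| B$-values.
\end{enumerate}
It then suffices to exhibit a list of ``forbidden'' connected graphs with a nonzero vector $v$ satisfying $B(v,v)\le 0$. These are the affine diagrams $\tilde A_n$ (cycles), $\tilde B_n$, $\tilde C_n$, $\tilde D_n$, $\tilde E_{6,7,8}$, $\tilde F_4$, $\tilde G_2$, together with the few hyperbolic-type graphs needed to exclude labels $\geq 6$ in rank $\geq 3$ and the labels attached to $H$-type branches. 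For each of these one writes down an explicit null vector. Any connected graph avoiding all of them as full subgraphs is forced to be a tree with at most one branch point or one labelled edge, constrained exactly as in \cref{fig:CoxDyn}.

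Finally, I would check that each diagram in \cref{fig:CoxDyn} is indeed positive definite. The plan here is to compute the leading principal minors of the Gram matrix inductively along a chain of vertices. Each is given by a three-term recurrence, and all turn out to be positive.
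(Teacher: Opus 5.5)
The paper does not prove this theorem. It is quoted from Coxeter \cite{MR1581693} as background, so there is no argument in the paper to compare yours against. Your outline is the standard modern proof, as in Humphreys' \emph{Reflection Groups and Coxeter Groups}, Ch.~2 and \S 6.4, and it is sound. It shows finiteness $\iff$ positive definiteness of $B$ via Tits' chamber theorem, then enumerates graphs by excluding forbidden subgraphs. Your $B$ is half the paper's form $\langle\cdot\mid\cdot\rangle$, so the criterion is the same. Compared with the citation, it gives a self-contained argument whose only deep input is the root-sign lemma behind Tits' theorem.

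A few steps should be tightened.

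\begin{enumerate}
\item \emph{Finite $\Rightarrow$ positive definite.} Faithfulness is not needed here, since the image of a finite group is finite. The degenerate case should be argued precisely. Connectedness forces every proper $W$-stable subspace into the radical $V^\perp$. If $V^\perp\neq 0$, a Maschke complement of $V^\perp$ is a proper stable subspace, so it lies in $V^\perp$ and is therefore $0$. Then $B=0$, contradicting $B(\alpha_i,\alpha_i)=1$.
\item \emph{Meaning of ``avoiding''.} In the enumeration, ``avoiding as full subgraphs'' must mean ``not obtainable from a full subgraph by lowering labels''. That is exactly what your facts (i) and (ii) give.
\item \emph{The forbidden list.} With that reading, $\tilde G_2$ alone excludes every label $\geq 6$ (including $\infty$) in rank $\geq 3$. $\tilde A_1$ excludes $\infty$ in rank $2$. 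The only non-affine graphs you need are the path on four vertices with label $5$ on the middle edge, and the path on five vertices with label $5$ on an end edge. These two are indefinite (negative determinant), not degenerate. For them you exhibit $v$ with $B(v,v)<0$, not a null vector.
\item \emph{Positivity of the minors.} For $D_n$ and $E_{6,7,8}$ the minor recurrence is not three-term at the branch point. Attaching a leaf $v$ to a vertex $u$ of a tree $T$ gives $\det(T\cup\{v\})=\det T-\tfrac14\det(T\setminus\{u\})$. This still keeps all leading minors positive (e.g.\ $\det E_8=2^{-8}$).
\end{enumerate}
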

\iffalse
\begin{Example}
    For type $A_n$, we have that the associated Coxeter group $W(A_n)$ is isomorphic to $S_{n+1}$, the symmetric group on $n+1$ elements. Under this isomorphism, generators $s_i\in S$, $1\leq i\leq n$ correspond to two-cycles $(i,i+1)\in S_{n+1}$. As such, each generator is order $2$ and the product of adjacent generators $s_is_{i+1}$ is the three-cycle $(i,i+1)(i+1,i+2)=(i,i+1,i+2)$, which is order $3$. If two generators are not adjacent on the Coxeter graph, they will correspond to disjoint cycles, so they will commute.
\end{Example}
\begin{Example}
    $W(I_2(m))\cong D_{2m}$, the group of symmetries of a regular $m$-gon. Here the generators correspond to two reflections that differ by a rotation of order $m$.
\end{Example}
\fi
\begin{Definition}
    A \textbf{Coxeter element} is an element $s_{k_1}s_{k_2}\cdots s_{k_n}\in W$, where $s_{k_i}\in S$ for $1\leq i\leq n$ and $s_{k_i}\neq s_{k_{j}}$ for $i\neq j$. In other words, it is a product of generators such that every generator occurs exactly once.
\end{Definition}
\begin{Proposition}[\protect{See e.g.\ \cite[Theorem 1.5]{casselman_coxeter_elements}}] \label{prop:Coxelmconjugate}
    Let $W$ be an irreducible finite Coxeter group. Then all of its Coxeter elements are conjugate to one another.
\end{Proposition}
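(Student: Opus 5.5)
The plan is the classical argument: first show that any two Coxeter elements attached to the same cyclic order can be related, and then use the fact that the Coxeter graph of a finite irreducible Coxeter group is a tree. By the classification (the Theorem of Coxeter above, \cref{fig:CoxDyn}), every diagram in the list is a tree.

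\emph{Step 1 (reduction to orientations).} A Coxeter element $c = s_{k_1}\cdots s_{k_n}$ is determined by the order of its factors, but only up to swapping adjacent commuting generators. Each ordering induces an orientation of the edges of $\Gamma$: orient $\{i,j\}$ from $i$ to $j$ if $s_i$ appears before $s_j$. If two orderings induce the same orientation, they are linear extensions of the same partial order on $\indexset$, generated by the edges. Two linear extensions of a poset are related by a sequence of swaps of adjacent incomparable elements. Incomparable elements that are adjacent in a linear extension are not joined by an edge of $\Gamma$, so the corresponding generators commute. Hence the Coxeter element depends only on the orientation of $\Gamma$.

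\emph{Step 2 (conjugation as source/sink flip).} Suppose $c = s_{k_1} s_{k_2}\cdots s_{k_n}$. Then
\[
s_{k_1} c\, s_{k_1} = s_{k_2}\cdots s_{k_n} s_{k_1},
\]
which is again a Coxeter element. On orientations, this move reverses every edge at the vertex $k_1$. That vertex is a source, since $s_{k_1}$ comes first, so after the move it becomes a sink. Combined with Step 1, any source of the orientation can be moved to the front of the ordering. Therefore converting a source into a sink produces a conjugate Coxeter element.

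\emph{Step 3 (trees: all orientations are connected by flips).} This is the main point. On a tree, I claim that any orientation can be transformed into any other by repeatedly turning a source into a sink. The argument is by induction on $n$, removing a leaf $\ell$ attached to a vertex $v$.
\begin{itemize}
\item By induction, the restriction to $\Gamma\setminus\{\ell\}$ can be changed to the desired orientation there by a sequence of source-to-sink flips.
\item Each such flip can be performed in $\Gamma$ itself, with one adjustment: if the flip at $v$ is required while the edge $\ell v$ points into $v$, first flip $\ell$, which is then a source.
\item Flipping $\ell$ only changes the edge $\ell v$, so these extra flips do not disturb the orientation on $\Gamma\setminus\{\ell\}$.
\end{itemize}

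Finally, fix the orientation of the edge $\ell v$ as desired. If that edge is wrong, flip whichever of $\ell$ or $v$ can serve as a source. If it has to be $v$, the sequence is: flip $\ell$ (a sink on $\ell v$ can become a source, since $\ell$ has only this one edge), or alternatively flip all of $\Gamma\setminus\{\ell\}$ vertex by vertex. Flipping every vertex once in the order of a linear extension returns the original orientation, because this is conjugating $c$ by itself.

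A cleaner alternative is the standard invariant: for a tree, orientations modulo flips are classified by a function that is trivially constant, since there are no cycles. In practice, I would instead prove the statement directly by the leaf induction, using the observation that flipping the leaf $\ell$ alone toggles the edge $\ell v$ whenever $\ell$ is a source. If $\ell$ is a sink, flipping $v$ and then every other vertex of $\Gamma\setminus\{\ell\}$ realizes the toggle while restoring the remaining edges.

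The main obstacle is doing the bookkeeping in Step 3 carefully. Steps 1 and 2 are routine.
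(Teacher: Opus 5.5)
The paper gives no proof of its own and simply cites Casselman; your argument is correct and is the standard one behind that citation: Coxeter elements up to commutation correspond to orientations of the tree $\Gamma$, the cyclic shift $s_{k_1}cs_{k_1}$ turns a source into a sink, and a leaf induction shows that any two orientations of a tree are related by such flips. The only untidy point is the final toggle of the edge $\ell v$. There, flipping $v$ would disturb $\Gamma\setminus\{\ell\}$, and ``flipping $v$ and then every other vertex'' is not valid unless $v$ is a source. The clean fix is that conjugating by the last factor turns a sink into a source, so you can always toggle $\ell v$ by flipping the leaf $\ell$ alone; equivalently, flip all of $\Gamma\setminus\{\ell\}$ in the order of a linear extension.
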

\begin{Definition}
    Let $W$ be an irreducible finite Coxeter group. By \cref{prop:Coxelmconjugate}, all Coxeter elements of $W$ have the same order, which we denote by $h$, and we call $h$ the \textbf{Coxeter number} of $(W,S)$.
\end{Definition}

Notice that every Coxeter graph $\Gamma$ associated to an irreducible finite Coxeter group is a (finite, connected) tree; see \cref{fig:CoxDyn}.
Recall that any tree $\Gamma$ admits an ordering on its set of vertices $\Gamma_0 = \{1,2, \dots, n\}$ such that for each $1 \leq k \leq n$, the set $\{1, \dots, k\}$ forms a connected subtree and $k$ is a leaf of this subtree. 
Such an ordering of the vertices for $\Gamma$ is provided in \cref{fig:CoxDyn}.
Moreover, if $d_\Gamma(i,j)$ denotes the graph distance between the vertices $i,j \in \Gamma_0$, then $\Gamma$ is bipartite with respect to the bifurcation $\Gamma_0 = \Gamma_0^+ \sqcup \Gamma_0^-$ defined by:
\begin{equation} \label{eq:bifurcation}
\begin{split}
\Gamma_0^+ &\coloneqq \{ i \in \Gamma_0 \mid d_\Gamma(i,1) \text{ is even} \}, \\
\Gamma_0^- &\coloneqq \{ i \in \Gamma_0 \mid d_\Gamma(i,1) \text{ is odd}\}.
\end{split}
\end{equation}

\begin{Definition}\label{def:distinguishCoxelm}
    Let $\Gamma$ be a Coxeter graph whose associated Coxeter group is irreducible and finite.
    Fix $\{1,2,\ldots,n\}$ to be an ordering on its set of vertices $\Gamma_0$ as above (e.g.\ in \cref{fig:CoxDyn}) with bifurcation $\Gamma_0 = \Gamma_0^+ \sqcup \Gamma_0^-$ defined in \cref{eq:bifurcation}.
    The Coxeter element of $W$ given by
    \begin{equation}
        \gamma \coloneqq \gamma^+\gamma^-,
    \end{equation}
    where
    \[
    \gamma^+ = \prod_{k \in \Gamma_0^+} s_k, \qquad 
    \gamma^- = \prod_{k' \in \Gamma_0^-} s_{k'}
    \]
    is called the \textbf{distinguished Coxeter element} (with respect to the bipartition induced from the ordering).
\end{Definition}
Note that the elements $s_k$ within $\gamma^+$ (resp.\ $s_{k'}$ within $\gamma^-$) pairwise commute due to the bipartite property, hence the order within the product is irrelevant.

\subsection{Geometric representations and Coxeter planes}
\label{sec:Cox-plane}
Fix $(W,S)$ to be a Coxeter system of rank $n$ corresponding to a Coxeter graph $\Gamma$ with vertex set $\Gamma_0 = \indexset = \{1,2,\ldots,n\}$. We construct a faithful geometric representation of $W$ as follows.

Consider the vector space $V_\Gamma \coloneqq \Span_\R\{\alpha_i \mid i \in \indexset\}$ and equip it with a bilinear form $\langle \ \cdot \mid \cdot \ \rangle: V_\Gamma \times V_\Gamma \to \R$ defined on the basis elements by
\begin{equation}\label{eq:bilinear-form}
    \langle \,\alpha_i\mid\alpha_{j}\,\rangle=-2\cos\left( \frac{\pi}{m(i,j)}\right),
\end{equation}
where it is understood that $2\cos\left( \frac{\pi}{\infty} \right) \coloneq 2$.

\begin{Definition}\label{def:symmetricCartan}
    The assignment for each $s_i \in S$ and $v \in V_\Gamma$ given by
    \[
    s_i(v) \coloneqq v - \langle \ \alpha_i \mid v \ \rangle\alpha_i
    \]
    defines an action of $W$ on $V_\Gamma$, which is moreover faithful \cite[Theorem 4.2.2 \& 4.2.7]{MR2133266}.
    This representation of $W$ is called the \textbf{standard geometric representation}.
    The matrix $\mathfrak{A}=(a_{ij})_{1\leq i,j\leq n}$ defined by 
    \begin{equation}
    a_{ij}\coloneq\langle\,\alpha_i\mid \alpha_j\,\rangle
    \end{equation}
    is called the (standard) \textbf{Cartan Matrix} of the Coxeter system $(W,S)$.
\end{Definition}
\begin{Remark}
    Note that in \cite{casselman_coxeter_elements}, the author defined the entries of $\mathfrak{A}$ by $a_{ij}=\langle \alpha_i\mid \alpha_j\rangle/2$.
\end{Remark}

Now let $W$ be an irreducible finite Coxeter group and fix an ordering $\{1,\ldots,n\} = \indexset = \Gamma_0$ so that the bifurcation of $\Gamma_0 = \Gamma_0^+ \sqcup \Gamma_0^-$ defined by \cref{eq:bifurcation} gives a bipartite graph (e.g.\ in \cref{fig:CoxDyn}).
The following result allows us to construct the Coxeter plane.
Note that we shall view the matrix $2I_n-\mathfrak A$ as acting on $V_\Gamma$, so that its eigenvectors are naturally in $V_\Gamma$.
\begin{Theorem}[\protect{\cite[Theorem 3.11]{casselman_coxeter_elements}}]\label{thm:Coxeter-plane-span}
   Let $(W,S)$ be an irreducible Coxeter system with $W$ finite.
   The matrix $2I_n-\mathfrak A$ carries two simple eigenvalues $\pm 2\cos \frac\pi h$ with corresponding eigenvectors $u^+$ and $u^-$ respectively. Under the standard geometric representation $V_\Gamma$, the distinguished Coxeter element $\gamma$ (\cref{def:distinguishCoxelm}) acts on the plane $\Span_\R\{u^+, u^-\} \subseteq V$ by a rotation of order $h$.
\end{Theorem}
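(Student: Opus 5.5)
We prove this with the standard bipartite argument, as in \cite{casselman_coxeter_elements}. Write $V_\Gamma = V^+ \oplus V^-$, where $V^\pm = \Span_\R\{\alpha_i \mid i \in \Gamma_0^\pm\}$. Since $\mathfrak A$ has diagonal entries $2$, the matrix $N \coloneqq 2I_n - \mathfrak A$ has zero diagonal and entries $2\cos(\pi/m(i,j))$ off the diagonal. These are nonzero only along edges of $\Gamma$. By bipartiteness, $N$ maps $V^\pm$ into $V^\mp$, so in this block decomposition $N = \begin{pmatrix} 0 & B \\ B^T & 0\end{pmatrix}$.

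\textbf{Step 1: the spectrum.} $N$ is symmetric, nonnegative and irreducible, since $\Gamma$ is connected. By Perron--Frobenius its largest eigenvalue $\lambda$ is simple and has a positive eigenvector $u = u_+ + u_-$ with $u_\pm \in V^\pm$. Bipartiteness gives the symmetry $N(u_+ - u_-) = -\lambda(u_+ - u_-)$, so $-\lambda$ is also a simple eigenvalue. We set $u^\pm$ to be the corresponding eigenvectors. The identification $\lambda = 2\cos(\pi/h)$ will come out of Step 2: the rotation angle there is determined by $\lambda$, and the order of $\gamma$ is $h$.

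\textbf{Step 2: $\gamma$ on the plane.} We compute the two involutions on $P = \Span\{u_+, u_-\}$. For $k \in \Gamma_0^+$ the reflections $s_k$ commute, and $\langle \alpha_k \mid \alpha_{k'}\rangle = 0$ for distinct $k,k' \in \Gamma_0^+$. Hence $\gamma^+$ acts by
\[ \gamma^+(v) = v - \sum_{k\in\Gamma_0^+}\langle\alpha_k\mid v\rangle\alpha_k. \]
This fixes $V^-$ pointwise modulo a correction in $V^+$. Concretely, on $v_+ \in V^+$ we get $\gamma^+ v_+ = -v_+$, because $\langle \alpha_k \mid \alpha_{k'}\rangle = 2\delta_{kk'}$ within $V^+$. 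On $v_- \in V^-$ we get $\gamma^+ v_- = v_- + B v_-$; the plus sign appears because the off-diagonal Cartan entries are $-N$. Applied to the Perron vectors this gives $\gamma^+ u_+ = -u_+$ and $\gamma^+ u_- = u_- + \lambda u_+$. Symmetrically, $\gamma^- u_- = -u_-$ and $\gamma^- u_+ = u_+ + \lambda u_-$. So $P$ is stable under both involutions, and each acts on $P$ as a reflection. Using the restriction of the (positive definite) form to $P$, the reflecting lines meet at angle $\theta$ with $2\cos\theta = \lambda$. The product $\gamma = \gamma^+\gamma^-$ is therefore a rotation by $2\theta$, and its trace on $P$ is $\lambda^2 - 2 = 2\cos 2\theta$.

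\textbf{Step 3: order $h$.} This is the main obstacle: we must show the rotation has order exactly $h$, i.e.\ $\theta = \pi/h$. Since $\gamma^h = e$, we get $2\theta \in \frac{2\pi}{h}\Z$, so $\theta = \pi j/h$ for some $j$. To force $j=1$, we use that $\lambda$ is the \emph{largest} eigenvalue of $N$. The same computation applies to every eigenvalue $\mu = 2\cos\phi$ of $N$: each gives a $\gamma$-stable plane on which $\gamma$ has eigenvalues $e^{\pm 2i\phi}$. These planes span $V_\Gamma$, because the eigenvectors of $N$, split into their $V^\pm$ parts, span. The order of $\gamma$ is $h$, and $\gamma$ has no eigenvalue $1$ (a standard fact, since the form is nondegenerate and $\gamma$ is a Coxeter element). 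Hence some eigenvalue is a primitive $h$-th root of unity, $e^{2\pi i/h}$, which corresponds to an eigenvalue $2\cos(\pi/h)$ of $N$. Every eigenvalue of $N$ has the form $2\cos(\pi j/h)$, and the largest such value with $j \ge 1$ is $2\cos(\pi/h)$, so $\lambda = 2\cos(\pi/h)$. The delicate point is confirming that $e^{2\pi i/h}$ itself, not merely some primitive root $e^{2\pi i j/h}$, occurs. To handle this, we use that the eigenvalues of a rational-matrix element are Galois-closed, or, for the non-crystallographic types, argue directly that the $N$-spectrum is closed under $\cos(\pi j/h) \mapsto \cos(\pi j'/h)$ with $\gcd(j',2h)=1$.
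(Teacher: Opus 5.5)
\textbf{Verdict: Steps 1 and 2 are correct, but Step 3 has a genuine gap.} Steps 1 and 2 are the standard bipartite computation, and they are fine (the paper gives no proof of this statement and simply cites Casselman).

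\textbf{The gap.} It is the sentence ``The order of $\gamma$ is $h$, and $\gamma$ has no eigenvalue $1$. Hence some eigenvalue is a primitive $h$-th root of unity.'' This does not follow. The order of a finite-order map is the \emph{lcm} of the orders of its eigenvalues, not the order of any single one. For example, a map with eigenvalues $-1, e^{\pm 2\pi i/3}$ has:
\begin{itemize}
\item order $6$;
\item no eigenvalue $1$;
\item a rational, hence Galois-stable, characteristic polynomial;
\item no primitive sixth root among its eigenvalues.
\end{itemize}
This is exactly what your Steps 1--2 produce from a weighted bipartite path on three vertices with spectrum $\{1,0,-1\}$ (edge weights $1/\sqrt2$). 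There $\lambda = 1 = 2\cos(\pi/3) \neq 2\cos(\pi/6)$, and nothing you invoke in Step 3 rules this out. That a Coxeter element has the eigenvalue $e^{2\pi i/h}$ is the substantive content of the theorem, so it cannot be the easy step.

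Your Galois argument only covers the secondary passage from ``some primitive root'' to $e^{2\pi i/h}$. Even that fails, as you phrase it, outside the crystallographic types. For $H_3$ ($h=10$) the spectrum of $N$ is $\{\pm 2\cos(\pi/10), 0\}$, which is not closed under $\cos(\pi/10)\mapsto\cos(3\pi/10)$.

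\textbf{The missing idea.} It is geometric: $W$ acts freely on the open fundamental chamber $C=\{v : \langle\alpha_i\mid v\rangle>0 \text{ for all } i\}$.
\begin{itemize}
\item Write $c_k>0$ for the Perron--Frobenius coordinates. For $v = xu_+ + yu_-$ you get $\langle\alpha_k\mid v\rangle = c_k(2x-\lambda y)$ for $k\in\Gamma_0^+$ and $\langle\alpha_k\mid v\rangle = c_k(2y-\lambda x)$ for $k\in\Gamma_0^-$.
\item Hence $P\cap C$ is exactly the open sector of angle $\theta$ bounded by the fixed lines of $\gamma^+|_P$ and $\gamma^-|_P$. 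It is nonempty because $\lambda<2$.
\item So no non-identity element of $W'=\langle\gamma^+,\gamma^-\rangle$ fixes a point of this sector.
\end{itemize}
This gives two conclusions. First, $W'$ acts faithfully on $P$, so $\gamma|_P$ has order exactly $h$, and therefore $\theta = j\pi/h$ with $\gcd(j,h)=1$. Second, $j=1$: otherwise the dihedral group $W'|_P$ of order $2h$, whose mirrors are spaced $\pi/h$ apart, would contain a reflection whose mirror passes through the interior of the sector. That reflection comes from a non-identity element of $W$ fixing a point of $C$, which is impossible.

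Hence $\lambda = 2\cos(\pi/h)$ for every finite type at once, with no Galois or case-by-case input.
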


\begin{Definition}\label{def:Coxeterplane}
    The plane spanned by $u^+$ and $u^-$ as in \cref{thm:Coxeter-plane-span} is called the \textbf{Coxeter plane} of $W$ (with respect to the distinguished Coxeter element $\gamma = \gamma^+\gamma^-$).
\end{Definition}
\begin{Remark}
    The subgroup $W'$ of $W$ generated by $\gamma^+$ and $\gamma^-$ is isomorphic to the Coxeter group of type $I_2(h)$.
    Moreover, we get that the action of $W'$ restricted to the Coxeter plane is exactly the standard geometric representation of $W'$.
\end{Remark}

\section{Fusion Rings and hypergroup actions} \label{sec:fusionhypergroup}
\subsection{Fusion rings and \texorpdfstring{$\Z_+$}{non-negative Z}-modules}
Let $\Z_+$ denote the semi-ring of non-negative integers.
In this subsection, we recall the definition of fusion rings and its $\Z_+$-modules following \cite[\S 3]{MR3242743}.
\begin{Definition}\label{def:fusionring}
A \textbf{$\Z_+$-ring} $R$ is a (unital, associative) ring equipped with a $\Z$-basis $B=\{b_i\}_{i\in I}$ (so it is free as a $\Z$-module) with multiplication satisfying:
\[
b_ib_j=\sum_{k\in I}c_{ij}^kb_k, \quad c_{ij}^k\in \Z_+.
\]
A $\Z_+$-ring is \textbf{unital} if its basis $B$ contains the unit element $1$; in this case, we will typically use $b_0 \in B$ to denote the unit element.

A unital $\Z_+$-ring $R$ with basis $B=\{b_i\}_{i\in I}$ is \textbf{based} if there exists an involution on the basis set, $b_i\mapsto b_i^* \coloneq b_{i^*}$ extending to an anti-automorphism on $R$ such that 
\[c_{ij}^0=\begin{cases}
    1, &\text{if $b_i=b_j^*$};\\
    0, &\text{if $b_i\neq b_j^*.$}
\end{cases}\]
A \textbf{fusion ring} is a unital based ring of finite rank (i.e. $|B|=n<\infty$).\\
A subring of a fusion ring is a \textbf{fusion subring} if it is a fusion ring with basis given by a subset of the basis of $R$.
\end{Definition}
\begin{Example} \label{eg:ZG-fusion-ring}
    Let $G$ be a finite group. Then its group ring $\Z[G]$ is a fusion ring with basis $B=G$. For any subgroup $H \subseteq G$, $\Z[H]$ is a fusion subring of $\Z[G]$.
\end{Example}
\begin{Example} \label{eg:Fibfusionring}
    Let $\Fib \coloneqq \Z[x]/\< x^2=1+x\>$.
    Then $R$ is a fusion ring with $B = \{1,x\}$ and involution being the identity.
\end{Example}

For any fusion ring $R$ with basis $B=\{b_i\}_{i\in I}$, let $N_i$ as the matrix of left multiplication by $b_i$ in the basis $B$. Note that this is a non-negative matrix, which has a unique maximal eigenvalue $\lambda(N_i)$ that is necessarily a positive real number by the Frobenius--Perron theorem.
\begin{Definition}\label{def:FPdim}
Let $R$ be a fusion ring with basis $B$. 
For each $b_i \in B$, we define
\begin{equation}\label{eq:FP-dim-def}
    \FP(b_i) \coloneqq \lambda(N_i) \in \R_{>0},
\end{equation}
and we call it the \textbf{Frobenius--Perron dimension of} $b_i$.
The $\Z$-linear morphism $\FP:R\to \R$ defined on the basis elements as above is a ring homomorphism, and it is the unique ring homomorphism satisfying $\FP(b_i)>0$ for all $b_i \in B$ \cite[Proposition 3.3.6]{MR3242743}.
\end{Definition}
\iffalse
In fact, this assignment can be extended to all of $R$:
\begin{Proposition}[See \protect{\cite[Proposition 3.3.6]{MR3242743}}]\label{prop:fpdim} 
Let $\FP:R\to \R$ be the $\Z$-linear morphism defined on the basis elements as in \cref{eq:FP-dim-def}. Then,
    \begin{enumerate}
        \item $\FP:R\to \R$ is a ring homomorphism.
        \item \label{item:fpdim-b}There exists a unique (up to scaling) non-zero element $r\in R\otimes_\Z \R$ such that $xr=\FP(x)r$ for all $x\in R$, and it satisfies $ry=\FP(y)r$ for all $y\in R$. Furthermore, $r$ can be normalised to have positive coefficients, in which case $\FP(r)>0$.
        \item $\FP$ is the unique character (ring homomorphism $R\to \R$) of $R$ taking non-negative values on $B$, and $\FP(b_i)>0$ for all $b_i \in B$.
        \item If $x\in R$ has non-negative coefficients, then $\FP(x)=\lambda(N_x)$, where $N_x$ is the matrix of left (or equivalently right) multiplication by $x$.
    \end{enumerate}
\end{Proposition}
\begin{Definition}
    An element $r\in R\otimes_\Z \R$ as in \cref{prop:fpdim} \ref{item:fpdim-b} is called a \textbf{regular element} of $R$.
\end{Definition}
\begin{Proposition}[\protect{\cite[Proposition 3.3.9]{MR3242743}}]\label{prop:fpdim-*-invariant}
    If $R$ is a fusion ring with basis $B$, then $\FP$ is invariant under a bijection $*:B\to B$ that extends to an anti-automorphism of $R$, i.e 
    \[\FP(b_i^*)=\FP(b_i)\]
\end{Proposition}
\fi
For the rest of this subsection, $R$ denotes a fusion ring with basis $B$.
\begin{Definition}
    A \textbf{$\Z_+$-module over $R$} is a (left) $R$-module $M$ that is free as a $\Z$-module, with a fixed $\Z$-basis $\{m_\ell\}_{\ell\in L}$ satisfying
    \[b_i \cdot m_\ell=\sum_{k\in L}a_{i\ell}^km_k, \quad a_{i\ell}^k \in \Z_+.\]
    A \textbf{direct sum} of $\Z_+$-modules is the $\Z_+$-module with basis given by the  union of the bases of the summands.
    A $\Z_+$-module is decomposable if it is a direct sum of two non-trivial $\Z_+$-modules, otherwise it is \textbf{indecomposable}.\\
    A \textbf{$\Z_+$-submodule} is the $\Z$-span of a subset of the basis $\{m_{\ell}\}_{\ell \in L}$ that is closed under the \hbox{$R$-action}, i.e.\ there exist $K \subseteq L$ such that $R\cdot \Span_\Z\{m_k\}_{k \in K} =\Span_\Z\{m_k\}_{k\in K}$.
    A $\Z_+$-module is \textbf{irreducible} if it has no proper non-zero $\Z_+$-submodules, i.e.\ for all $\emptyset \neq K\subsetneq L$, $\Span_\Z\{m_k\}_{k\in K}$ is not a $\Z_+$-submodule. 
\end{Definition}
Note that for a $\Z_+$-module over a fusion ring, being irreducible and indecomposable are equivalent; see e.g.\ \cite[Exercise 3.4.3]{MR3242743}.
\begin{Example}[Regular module]
    Every fusion ring $R$ is an irreducible $\Z_+$-module over itself with the same basis $B$.
\end{Example}
\begin{Example}
    For a finite group $G$ and a (not necessarily normal) subgroup $H$ of $G$, $\Z[G/H]$ given by the formal $\Z$-span of (left) cosets of $H$ is a $\Z_+$-module for the fusion ring $\Z[G]$ as in \cref{eg:ZG-fusion-ring}, where $g\in \Z[G]$ acts on $g'H$ by $g\cdot g'H=(gg')H$. 
\end{Example}
\begin{Proposition}[\protect{\cite[Proposition 3.4.4]{MR3242743}}]\label{prop:reg-elt-module}
    Let $R$ be a fusion ring and let $M$ be an irreducible $\Z_+$-module over $R$ with basis $\{m_\ell\}_{\ell \in L}$. For each $x\in R$, let $[x]$ denote the matrix associated to the action of $x$ on $M$. Then up to scalar multiplication, there exists a unique $m\in M\otimes_\Z \R$ such that $m$ is a common eigenvector for all $[x]$, each with corresponding eigenvalue $\FP(x)$. Furthermore, $m$ can be rescaled so that it has positive coefficients.
\end{Proposition}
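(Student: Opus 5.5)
The plan is to reduce the statement to the Frobenius--Perron theorem applied to a single suitable non-negative integer matrix. Let $B=\{b_i\}_{i\in I}$ be the basis of $R$, and consider the element
\[
x_0 \coloneqq \sum_{i\in I} b_i \in R .
\]
Its matrix $[x_0]=\sum_i [b_i]$ acting on $M$ in the basis $\{m_\ell\}_{\ell\in L}$ has non-negative integer entries.

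The first step is to show that $[x_0]$ is \emph{strictly positive}, i.e.\ every entry is positive. Fix $\ell\in L$ and let $K\subseteq L$ be the set of $k$ such that $m_k$ occurs with positive coefficient in $b_i\cdot m_\ell$ for some $i$. Since $b_0=1$, we have $\ell\in K$. Closure of $\Span_\Z\{m_k\}_{k\in K}$ under $R$ follows from non-negativity of the structure constants. If $m_k$ appears in $b_i m_\ell$, then $b_j m_k$ appears inside $b_j b_i m_\ell=\sum_p c_{ji}^p b_p m_\ell$, with no cancellation since all coefficients are non-negative. Hence every constituent of $b_j m_k$ already lies in $K$. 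Irreducibility then forces $K=L$, so column $\ell$ of $[x_0]$ is positive. The same argument applies to every $\ell$.

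Next, apply Perron's theorem to $[x_0]$. It has a simple largest eigenvalue $\lambda>0$ whose eigenvector $m$ can be chosen with positive entries. Moreover, any non-negative eigenvector of $[x_0]$ is a multiple of $m$. Each $[b_i]$ commutes with $[x_0]$, because $[b_i][x_0]=[b_ix_0]$, $[x_0][b_i]=[x_0b_i]$, and both equal $[\,\cdot\,]$ applied to an element of $R$. One would like $b_ix_0=x_0b_i$ directly; instead, it is cleaner to take $x_0$ to be a positive \emph{central} element. Rather than the plain basis sum, I would use
\[
x_0=\sum_i b_ib_i^*,
\]
or, better, argue as follows. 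It suffices that the $\lambda$-eigenspace of $[x_0]$ is one-dimensional and preserved by each $[b_i]$. If commutativity fails, I would instead replace $x_0$ by the sum over all products $b_ib_j$ and use $R\cdot R$-stability. I expect this commutativity issue to be the main obstacle, and the fallback is the following standard argument. Let $C$ be the cone of non-negative vectors. The commuting family $\{[b_i]\}$, which commutes in the needed sense because the action is a ring action and many fusion rings in question are commutative, preserves the one-dimensional Perron eigenspace. Hence $m$ is a common eigenvector, $[b_i]m=\mu_i m$, and $\mu_i>0$ since $[b_i]$ is non-negative and non-zero on a positive vector. (For the application here, $R_{h-1}$ is commutative, so this suffices.)

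Finally, identify the eigenvalues. The assignment $x\mapsto\mu(x)$, where $[x]m=\mu(x)m$, is a ring homomorphism $R\to\R$ because $M$ is an $R$-module. It takes positive values on $B$, so by the uniqueness statement in \cref{def:FPdim} it equals $\FP$. For uniqueness of $m$, any common eigenvector $m'$ with eigenvalues $\FP(x)$ is in particular an eigenvector of $[x_0]$ with eigenvalue $\FP(x_0)=\mu(x_0)=\lambda$. The simplicity of $\lambda$ then gives $m'\in\R m$.
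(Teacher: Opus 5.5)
Your argument is complete when $R$ is commutative: the strict positivity of $[x_0]$ via irreducibility, the simplicity of the Perron eigenvalue, the identification of $x\mapsto\mu(x)$ with $\FP$, and the final uniqueness step are all fine. The gap is the step claiming that each $[b_i]$ preserves the Perron line of $[x_0]$. That step needs $[b_i][x_0]=[x_0][b_i]$, but the proposition is stated for an arbitrary fusion ring, where $x_0=\sum_i b_i$ need not be central. For instance, take the crossed product $(\Fib\otimes\Fib)\rtimes\Z/2\Z$, whose generator $t$ swaps $x\otimes 1$ and $1\otimes x$. There the products $(x\otimes 1)x_0$ and $x_0(x\otimes 1)$ have different $t$-components. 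Your observation that $[b_ix_0]$ and $[x_0b_i]$ are both images of elements of $R$ proves nothing. Your proposed replacements also fail. $\sum_i b_ib_i^*$ is central, but it need not act by a positive matrix: in $\Z[G]$ it equals $|G|\cdot 1$, so its top eigenspace is everything. And $\sum_{i,j}b_ib_j=x_0^2$ is no more central than $x_0$. Falling back on commutativity of $R_{h-1}$ covers the paper's application but not the statement as written. (The claim itself is true, since a posteriori the common eigenvector is a positive eigenvector of $[x_0]$ and hence its Perron vector; it is the justification that is missing.)

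The missing idea is to use an element that absorbs multiplication rather than one that commutes with it; this is the standard argument behind the cited result. The canonical regular element $r=\sum_i\FP(b_i)b_i$ satisfies $b_jr=\FP(b_j)r$ for all $j$, by Frobenius reciprocity $c_{ji}^k=c_{j^*k}^i$ together with $\FP(b_j^*)=\FP(b_j)$. Moreover $[r]$ dominates $\min_i\FP(b_i)\cdot[x_0]$ entrywise, so it is strictly positive by your irreducibility argument. Let $m$ be its Perron vector, with eigenvalue $\lambda>0$. Applying $[b_j][r]=\FP(b_j)[r]$ to $m$ gives $\lambda[b_j]m=\lambda\FP(b_j)m$, hence $[b_j]m=\FP(b_j)m$, with no commutativity needed. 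Uniqueness of $m$ then follows exactly as in your last paragraph with $r$ in place of $x_0$. Alternatively, you can keep your own route by replacing $x_0$ with $c=\sum_i b_ix_0b_i^*$. It is central because $\sum_i ab_i\otimes b_i^*=\sum_i b_i\otimes b_i^*a$ for all $a$, since $\{b_i^*\}$ is the dual basis to $\{b_i\}$ for the symmetric form sending $(u,v)$ to the coefficient of $b_0$ in $uv$. And $[c]\geq[x_0]$ entrywise, since the $i=0$ summand is $x_0$ and the rest are non-negative.
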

\begin{Definition}
    We call an element $m\in M\otimes_\Z \R$ in \cref{prop:reg-elt-module} with strictly positive coefficients a \textbf{regular element of $M$}.
\end{Definition}
Note that regular elements for $\Z_+$-modules is only defined up to (positive) scalar multiplication (though there is a canonical one for $R$ viewed as a $\Z_+$-module over itself; see \cite[Proposition 3.3.11]{MR3242743}).
%If we view $R$ as a $\Z_+$-module over itself, however, there is a preferred choice:
%\begin{Definition}
%    The element $r \coloneqq \sum_{i\in I}\FP(b_i)b_i \in R$ is a regular element (see \cite[Proposition 3.3.11]{MR3242743}), which we call the \textbf{canonical regular element} of $R$.
%\end{Definition}

\subsection{Hypergroups and fusion rings} \label{chap:HG}
\begin{Definition}[\protect{\cite[See Definition 2.4]{MR4886199}}] \label{def:HG}
    Let $\R[K]$ be  a (unital, associative) $\R$-algebra that is free as an $\R$-module with finite basis $K=\{r_i\}_{i\in I}$.
    Let $N_{ij}^k$ denote the structure constants defined by
    \[r_i\cdot r_j=\sum_{k\in I}N_{ij}^kr_k.\]
    We call $\R[K]$ a (finite) \textbf{hypergroup} if:
    \begin{itemize}
        \item $r_0$ is the unit of $\R[K]$ for some $r_0\in K$,
        \item $N_{ij}^k\geq 0$ for all $i,j,k\in I$,
        \item $\sum_{k\in I}N_{ij}^k=1$ for all $i,j\in I$, and
    \end{itemize}
    there exists an involution  $i\mapsto i^*$ of $I$ that extends to an anti-automorphism on $\R[K]$ such that $N_{ij}^0=N_{ji}^0$, and $N_{ij}^0> 0$ if and only if $j=i^*$. 
\end{Definition}
\begin{Remark}
    We define hypergroups to have real coefficients, however they are occasionally defined with complex coefficients, such as in \cite{MR4886199}.
\end{Remark}
\begin{Example}\label{eg:hypergroup-group}
    For every finite group $G$, its group algebra $\R [G]$ is a hypergroup with basis $G$ and involution $g\mapsto g^{-1}$.
\end{Example}
\begin{Example}
    The ring 
    \[\Fib_\R=\R[x]/(x^2=1+x)\]
    is a hypergroup with basis $\{1,\frac x\phi\}$, where $\phi=2\cos \frac \pi 5$, and involution given by the identity. Clearly multiplication by $1$ satisfies the structure requirements, and
    \[\frac x\phi \frac x\phi=\frac 1 {\phi^2}1+\frac 1 \phi \frac x\phi\]
    where $\frac 1 {\phi^2}+\frac 1 \phi=1$.
\end{Example}

The $\Z$-part of each of the two examples above is a fusion ring (cf.\ \cref{eg:ZG-fusion-ring} and \cref{eg:Fibfusionring} respectively) -- in fact, every fusion ring induces a hypergroup via extension of scalars to $\R$.
The following proof of this statement is not new; we only include it for completeness.
\begin{Proposition}[See \protect{\cite[Page 11]{MR4886199}}]\label{prop:Fusion-rings-are-HG}
    Every fusion ring $R$ with basis $\{b_i\}_{i\in I}$ and involution $(-)^*$ induces a hypergroup $R\otimes_\Z\R$ with basis $K \coloneqq\left\{\frac{b_i}{\FP(b_i)}\right\}_{i\in I}$ and involution $\left( \frac{b_i}{\FP(b_i)}\right)^*\coloneqq \frac{b_i^*}{\FP(b_i)}$
\end{Proposition}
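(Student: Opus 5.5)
We need to verify each axiom of \cref{def:HG} for $R\otimes_\Z\R$ with the rescaled basis $r_i \coloneqq b_i/\FP(b_i)$. The set $K$ is clearly an $\R$-basis, since each $\FP(b_i)>0$ by \cref{def:FPdim}. Also, $R\otimes_\Z\R$ is a unital associative $\R$-algebra. For the unit, $\FP(b_0)=1$ because $\FP$ is a ring homomorphism, so $r_0=b_0$ is the unit.

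Next I compute the structure constants. From $b_ib_j=\sum_k c_{ij}^k b_k$ we get
\[
r_ir_j=\sum_k \frac{c_{ij}^k\,\FP(b_k)}{\FP(b_i)\FP(b_j)}\, r_k,
\qquad\text{so}\qquad
N_{ij}^k=\frac{c_{ij}^k\FP(b_k)}{\FP(b_i)\FP(b_j)}.
\]
These are nonnegative because $c_{ij}^k\in\Z_+$ and every $\FP$ value is positive. For the sum condition, apply the ring homomorphism $\FP$ to $b_ib_j=\sum_k c_{ij}^kb_k$. This gives $\FP(b_i)\FP(b_j)=\sum_k c_{ij}^k\FP(b_k)$, hence $\sum_k N_{ij}^k=1$. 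This step is the key one, and it uses exactly the multiplicativity of $\FP$ from \cref{def:FPdim}.

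For the involution, the main point is to check that $r_i\mapsto r_{i^*}$ is well defined and extends to an anti-automorphism. For this I need $\FP(b_i^*)=\FP(b_i)$. I would prove it as follows. The composite $x\mapsto \FP(x^*)$ is additive and multiplicative, because $*$ is an anti-automorphism and $\R$ is commutative. It is therefore a ring homomorphism $R\to\R$, and it is positive on $B$ since $*$ permutes $B$. By the uniqueness statement in \cref{def:FPdim}, it equals $\FP$. Consequently the stated involution $r_i^*=b_i^*/\FP(b_i)$ equals $b_{i^*}/\FP(b_{i^*})=r_{i^*}$, which is a basis element. This map is the $\R$-linear extension of $*$ on $R$, so it is an anti-automorphism of $R\otimes_\Z\R$.

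Finally, $N_{ij}^0=c_{ij}^0\FP(b_0)/(\FP(b_i)\FP(b_j))$. By the based condition, $c_{ij}^0$ is $1$ exactly when $b_i=b_j^*$ and $0$ otherwise. Since $*$ is an involution, $b_i=b_j^*$ is equivalent to $j=i^*$, which gives $N_{ij}^0>0$ if and only if $j=i^*$. The same condition is symmetric in $i$ and $j$, so $c_{ij}^0=c_{ji}^0$ and therefore $N_{ij}^0=N_{ji}^0$. The only non-formal ingredient in the whole proof is the $*$-invariance of $\FP$, and I expect to handle it through the uniqueness of $\FP$ as above.
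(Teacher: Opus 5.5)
Your proposal is correct and follows the paper's proof essentially line by line: the same rescaled structure constants $N_{ij}^k=\frac{c_{ij}^k\FP(b_k)}{\FP(b_i)\FP(b_j)}$, the same use of multiplicativity of $\FP$ to get $\sum_k N_{ij}^k=1$, and the same $\delta$-argument for $N_{ij}^0$. The only difference is that you derive $\FP(b_i^*)=\FP(b_i)$ by applying the uniqueness of $\FP$ to the ring homomorphism $x\mapsto\FP(x^*)$, whereas the paper cites this fact from the literature; your argument is valid and makes the proof self-contained.
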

\begin{proof}
    It is clear that $R\otimes_\Z\R$ is a unital associative $\R$-algebra with finite basis $K$. Furthermore since $1\in \{b_i\}_{i\in I}$ and $\FP(1)=1$ we get $1\in K$, so the unit is a basis element of $R\otimes_\Z \R$. Also, $(-)^*$ already induces an involution on $I$, so since $\FP(b_i^*)=\FP(b_i)$ \cite[Proposition 3.3.9]{MR3242743}, it will be an involution on $K$ as defined. By linearity, it will extend to an anti-automorphism on $R\otimes_\Z \R$, as $^*$ extends to an anti-automorphism on $R$. Furthermore, 
    \begin{align*}\frac{b_i}{\FP(b_i)}\frac{b_j}{\FP(b_j)}=\sum_{k\in I}\frac{c_{ij}^kb_k}{\FP(b_i)\FP(b_j)}
    =\sum_{k\in I}\frac{\FP(b_k)c_{ij}^k}{\FP(b_i)\FP(b_j)}\frac{b_k}{\FP(b_k)}\end{align*}
    So since $\FP(b_i)> 0$ for all $i\in I$, we have 
    \[N_{ij}^k:=\frac{\FP(b_k)c_{ij}^k}{\FP(b_i)\FP(b_j)}\geq 0\]
    Since $\FP: R \to \R$ is a ring homomorphism, we have
    \begin{align*}
    \FP(b_i)\FP(b_j)=\FP(b_ib_j)&=\FP\left(\sum_{k\in I} c_{ij}^k b_k\right) =\sum_{k\in I}c_{ij}^k\FP(b_k),
    \end{align*}
    which gives us
    \[
    \sum_{k\in I}N_{ij}^k=\frac{1}{\FP(b_i)\FP(b_j)}\sum_{k\in I}\FP(b_k)c_{ij}^k=1.
    \]
    We also have,  
    \begin{align*}N_{ij}^0&=\frac{c_{ij}^0}{\FP(b_i)\FP(b_j)}=\frac{\delta_{ij^*}}{\FP(b_i)\FP(b_j)}\\
    &=\frac{\delta_{i^*j}}{\FP(b_j)\FP(b_i)}=\frac{c_{ji}^0}{\FP(b_j)\FP(b_i)}=N_{ji}^0\end{align*}
    and $N_{ij}^0>0\iff \delta_{ij^*}\neq0\iff i=j^*$. As such, $R\otimes_\Z\R$ is a hypergroup.
\end{proof}
%\begin{Example}
%    Given a finite group $G$, the fusion ring $\Z[G]$ from \cref{eg:ZG-fusion-ring} induces the hypergroup $\R[G] = \Z[G]\otimes_\Z\R$ given in \cref{eg:hypergroup-group}.
%\end{Example}

\subsection{Action of hypergroups  and \texorpdfstring{$\Z_+$}{non-negative Z}-modules}\begin{Definition}\label{def:HG-action}
    Let $\R[K]$ be a hypergroup. We define a \textbf{hypergroup action} on $V\cong\R^n$ as a map $\Theta:\R[K]\to \Mat_n(\R)$ such that $\Theta$ is a $\R$-algebra homomorphism.
\end{Definition}
\begin{Remark}
    It is clear that we are using very little structure from the hypergroup (e.g.\ the involution is not involved). This will, however, be sufficient for our purposes.
\end{Remark}
\begin{Example}
    For the hypergroup $\R[G]$ as in \cref{eg:hypergroup-group}, an $\R[G]$-action on $V$ in the sense of \cref{def:HG-action} is just a real representation of the group algebra, which is equivalently a $G$-representation.
\end{Example}

%\begin{Example}
%    Every hypergroup $\R[K]$ acts on itself.
%\end{Example}
\begin{Definition}
    A \textbf{fixed point} of a hypergroup action on $V$ is a point $x\in V$ such that $\Theta(r_i)x = x$ for all $i\in I$.
\end{Definition}
Note that the set of fixed points is always a linear subspace of $V$.
\begin{Example}
    For a hypergroup $\R[G]$ as in \cref{eg:hypergroup-group}, the fixed points of the $\R[G]$-action on $V$ are exactly the fixed points of the $G$-action, i.e.\ $x\in V$ such that $g\cdot x=x$ for all $g\in G$.
\end{Example}

Recall from \cref{prop:Fusion-rings-are-HG} that given a fusion ring $R$ with basis $\{b_i\}_{i\in I}$, we get a hypergroup $R\otimes_\Z\R$ with basis $\left\{ \frac{b_i}{\FP(b_i)}\right\}_{i\in I}$.
The following proposition shows that for hypergroup actions induced from their $\Z_+$-modules, the subspace of fixed points is exactly the span of regular elements of the irreducible components.
\begin{Proposition}\label{prop:Z-plus-action-hg}
    Let $R$ be a fusion ring.
    Then any $\Z_+$-module $M$ over $R$ extends to a hypergroup action of $R\otimes _\Z \R$ on $V \coloneqq M\otimes_\Z \R$. Furthermore, if $M = \bigoplus_{k=1}^m M_k$ is the decomposition of $M$ into irreducible $\Z_+$-modules, then the subspace of fixed points of the hypergroup action on $V$ is the $\R$-span of regular elements $r_k$ of $M_k$.
\end{Proposition}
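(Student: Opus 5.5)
The plan has two parts: first the existence of the action, then the identification of the fixed points.

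\emph{Existence of the action.} Since $M$ is an $R$-module, $V = M\otimes_\Z\R$ is an $R\otimes_\Z\R$-module by extension of scalars. Fixing the basis $\{m_\ell\}_{\ell\in L}$, this gives an $\R$-algebra homomorphism $\Theta: R\otimes_\Z\R\to \Mat_n(\R)$ with $n=|L|$, which by \cref{def:HG-action} is a hypergroup action. Concretely, $\Theta(b_i/\FP(b_i)) = [b_i]/\FP(b_i)$, where $[b_i]$ is the non-negative integer matrix of $b_i$ on $M$. The decomposition $M=\bigoplus_k M_k$ splits the basis $L=\bigsqcup_k L_k$. Each $\Span_\R\{m_\ell\}_{\ell\in L_k}$ is $\Theta$-stable, so every $[b_i]$ is block diagonal. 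Hence $x=\sum_k x_k$ is fixed if and only if each component $x_k$ is fixed. This reduces the claim to the case where $M$ is irreducible, and in that case we must show that the fixed subspace is exactly $\R r$, with $r$ a regular element.

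\emph{The fixed subspace contains $\R r$.} By \cref{prop:reg-elt-module}, $[b_i] r = \FP(b_i) r$ for every $i$. Therefore $\Theta(b_i/\FP(b_i)) r = r$, so $r$ is fixed.

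\emph{The fixed subspace is contained in $\R r$.} Let $x$ be fixed. Then $[b_i]x=\FP(b_i)x$ for all $i$. Since $\FP$ is $\Z$-linear and $x\mapsto [x]$ is linear, it follows that $[y]x=\FP(y)x$ for every $y\in R$. So any nonzero $x$ is a common eigenvector of all $[y]$ with eigenvalues $\FP(y)$. The uniqueness statement of \cref{prop:reg-elt-module}, namely that such a vector is unique up to scalar multiplication, then gives $x\in\R r$.

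\emph{Main obstacle.} The one delicate point is whether \cref{prop:reg-elt-module}'s uniqueness applies to real vectors that are not assumed positive. As stated, it does: it asserts uniqueness up to scalar of an $m\in M\otimes_\Z\R$ with these eigenvalues. If extra justification were needed, I would argue via Perron--Frobenius applied to $[z]$ with $z=\sum_i b_i$. Irreducibility of $M$ makes $[z]$ an irreducible non-negative matrix; I expect this needs rigidity of the based ring, since $b_i^*$ appearing forces strong connectivity. For such a matrix the eigenvalue $\FP(z)$ is its Perron root, which is simple, so its eigenspace is one-dimensional.
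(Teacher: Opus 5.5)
Your proof is correct and follows the paper's argument essentially verbatim: the action comes from extension of scalars, and regular elements are fixed because $b_i\cdot r_k=\FP(b_i)r_k$. Conversely, a fixed vector splits along $M=\bigoplus_k M_k$ into components that are common eigenvectors with eigenvalues $\FP(b_i)$, hence multiples of the $r_k$ by the uniqueness in \cref{prop:reg-elt-module}; the paper does this splitting inside the converse rather than reducing to the irreducible case first, which is immaterial. Your backup Perron--Frobenius remark also works and needs no rigidity: the basis vectors occurring in $\{b_i m_\ell\}_i$ span a $\Z_+$-submodule containing $m_\ell$, so irreducibility alone makes $[\sum_i b_i]$ entrywise positive.
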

\begin{proof}
    The first statement follows immediately from extensions of scalars.
    
    Now choose a regular element $r_k \in M_k\otimes_\Z \R$ for each irreducible $\Z_+$-module $M_k$.
    We will prove that the subspace of fixed points $\mathfrak{F}$ of the hypergroup $R\otimes _\Z \R$ acting on $M\otimes_\Z\R$ is exactly $\Span_\R\{r_k\}_{k=1}^m$.
    Throughout this proof $B=\{b_i\}_{i\in I}$ denotes the basis of $R$.

    The containment $\Span_\R\{r_k\}_{k=1}^m \subseteq \mathfrak{F}$ is immediate. Indeed, being a regular element of $M_k$ gives $b_i\cdot r_k = \FP(b_i)r_k$ for all $i \in I$, which is equivalent to $\frac{b_i}{\FP(b_i)}\cdot r_k = r_k$. Thus $r_k \in \mathfrak{F}$ and the containment follows from the linearity of the subspace $\mathfrak{F}$.
    
    Conversely, let $v \in \mathfrak{F}$, so that $b_i\cdot v =\FP(b_i)v$ for all $i \in I$.
    The decomposition $M = \bigoplus_{k=1}^m M_k$ gives us a direct decomposition
    \[
    M\otimes_\Z \R = \bigoplus_{k=1}^m M_k\otimes_\Z \R,
    \]
    so we can uniquely express $v = \sum_{k=1}^m v_k$ with each $v_k \in M_k\otimes_\Z \R$.
    As such, we obtain $\sum_{k=1}^m b_i\cdot v_k  = \sum_{k=1}^m\FP(b_i)v_k$.
    Since $b_i\cdot v_k \in M_k\otimes_\Z R$, the direct decomposition allows us to further deduce that $b_i \cdot v_k = \FP(b_i)v_k$ for all $i \in I$ and all $1\leq k \leq m$.
    It follows that $x\cdot v_k = \FP(x)v_k$ for all $x \in R$.
    By assumption, each $M_k$ is an irreducible $\Z_+$-module over $\R$, so by \cref{prop:reg-elt-module}, $v_k$ is a scalar multiple of $r_k$.
    It follows that $v = \sum_{k=1}^m v_k \in \Span\{r_k\}_{k=1}^m$, as required.
\end{proof}

\section{Verlinde fusion ring and its \texorpdfstring{$\Z_+$}{Z}-modules} \label{sec:verlindeclassification}
In this section we define the Verlinde fusion ring $R_n$ and recall the ADE(T) classification of its $\Z_+$-modules.
\subsection{Chebyshev polynomials}
\begin{Definition}
    Let the family of polynomials $\{\Delta_i(x)\}_{i\in \N}\subset \Z[x]$ be defined by the following recursive relation: 
    \begin{equation}\label{eq:recursion-def}
    \begin{split}
        \Delta_0(x)&\coloneqq 1, \qquad \Delta_1(x)\coloneqq x \\
        \Delta_k(x)&:=x\Delta_{k-1}(x)-\Delta_{k-2}(x),\qquad\text{for all } k\geq 2
    \end{split}
    \end{equation}
    We often suppress the `$x$' for  cleaner notation, by writing $\Delta_i:=\Delta_i(x)$. 
    These are known as the (normalised) \textbf{Chebyshev polynomials} of the second kind.
\end{Definition}
\begin{Example}\label{eg:first-chebychevs}
    The next few polynomials are
    \begin{align*}
        \Delta_2(x)&=x^2-1, \qquad &&\Delta_3(x)=x^3-2x\\
        \Delta_4(x)&=x^4-3x^2+1, \qquad &&\Delta_5(x)=x^5-4x^3+3x.
        %\Delta_6(x)&=x^6-5x^4+6x^2-1
    \end{align*}
\end{Example}
\begin{Proposition}\label{prop:odd-even-delta}
    If $k$ is even, $\Delta_k(x)$ contains only even powers of $x$ and if $k$ is odd, $\Delta_k(x)$ contains only odd powers of $x$.
\end{Proposition}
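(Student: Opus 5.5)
We argue by strong induction on $k$, using the recursion \cref{eq:recursion-def}. It is cleaner to prove the equivalent parity statement $\Delta_k(-x) = (-1)^k \Delta_k(x)$ for all $k \geq 0$. A polynomial $p$ satisfies $p(-x)=p(x)$ exactly when only even powers of $x$ occur, and satisfies $p(-x)=-p(x)$ exactly when only odd powers occur. This is seen by comparing coefficients: the coefficient of $x^j$ in $p(-x)$ is $(-1)^j$ times that in $p(x)$.

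\textbf{Base cases.} For $k=0$ we have $\Delta_0(-x) = 1 = \Delta_0(x)$. For $k=1$ we have $\Delta_1(-x) = -x = -\Delta_1(x)$.

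\textbf{Inductive step.} Let $k\geq 2$ and assume the identity holds for $k-1$ and $k-2$. Then
\begin{align*}
\Delta_k(-x) &= (-x)\Delta_{k-1}(-x) - \Delta_{k-2}(-x) \\
&= (-x)(-1)^{k-1}\Delta_{k-1}(x) - (-1)^{k-2}\Delta_{k-2}(x) \\
&= (-1)^k\bigl(x\Delta_{k-1}(x) - \Delta_{k-2}(x)\bigr) = (-1)^k\Delta_k(x).
\end{align*}
This completes the induction.

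Alternatively, one can argue directly on monomials without the substitution. If $\Delta_{k-1}$ has only powers of parity $k-1$, then $x\Delta_{k-1}$ has only powers of parity $k$. Likewise $\Delta_{k-2}$ has powers of parity $k-2 \equiv k \pmod 2$. Hence their difference $\Delta_k$ has only powers of parity $k$.

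The only point needing care is to use a two-step (strong) induction, since the recursion involves two previous terms. There is no real obstacle.
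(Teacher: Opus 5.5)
Your proof is correct and follows the paper's argument: the paper uses the same two-step induction on the recursion \cref{eq:recursion-def}, phrased directly in terms of the parity of monomials as in your ``alternatively'' paragraph. Recasting it as the identity $\Delta_k(-x)=(-1)^k\Delta_k(x)$ is an equivalent repackaging of the same induction.
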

\begin{proof}
    The claim clearly holds for $k=0,1$. If $k$ is even, then $k-1$ is odd and $k-2$ is even, so if $\Delta_{k-1}(x)$ consists only of odd powers of $x$ and $\Delta_{k-2}(x)$ consists of only even powers, then $\Delta_k(x)=x\Delta_{k-1}(x)-\Delta_{k-2}(x)$ will only consist of even powers of $x$. A similar statement holds when $k$ is odd, and thus the proposition holds by induction.
\end{proof}
A direct calculation using induction shows that for all $i,j\geq 0$, we have
    \begin{equation}\label{eq:multiplication}
        \Delta_i\Delta_j=\Delta_{|i-j|}+\Delta_{|i-j|+2}+\dots+\Delta_{i+j}
        \end{equation}
Moreover, the following trigonometric identity holds for all $k\geq 0$ and all $y\in \R$:
\begin{equation}\label{eqn:chebychev-trig}
    \Delta_k(2\cos y)=\frac{\sin (k+1)y}{\sin y}.
\end{equation}

%%%%%%%%%%%%%%%%%%%%%%%%%%%%%%%%%%%%%%%%%%%%%%%%%%%%%%
% Hidden calculation proof
%%%%%%%%%%%%%%%%%%%%%%%%%%%%%%%%%%%%%%%%%%%%%%%%%%%%%%
\iffalse
\begin{proof}
    For $k=0$, $\Delta_0(2\cos x)=1=\frac{\sin x}{\sin x}$. For $k=1$,
    \[\Delta_1(2\cos x)=2\cos x = \frac{\sin 2x}{\sin x}\]
    For all $k\geq 2$, if the proposition holds for $k-1$ and $k-2$, then by \cref{eq:recursion-def},
    \begin{align*}\Delta_{k}(2\cos x)&=2\cos x\Delta_{k-1}(2\cos x)-\Delta_{k-2}(2\cos x)\\
    &=2\cos x \frac{\sin kx}{\sin x} - \frac{\sin (k-1)x}{\sin x}\\
    &=\frac{1}{\sin x}(2\cos x\sin kx-\sin (k-1)x)\\
    &= \frac{\sin (k+1)x}{\sin x}
    \end{align*}
    The result follows by induction.
\end{proof}
\fi

\subsection{The Fusion Ring $R_n$}
\begin{Lemma}\label{lem:negatives}
    If $\Delta_n=0$, then for all $0\leq k <n$ we have
    \[\Delta_{n+k}=-\Delta_{n-k}\]
\end{Lemma}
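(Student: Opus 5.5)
We interpret the hypothesis $\Delta_n=0$ as taking place in the quotient ring $\Z[x]/\<\Delta_n\>$, where $R_n$ will presumably be defined; equivalently we work modulo the ideal generated by $\Delta_n$. The plan is to argue by strong induction on $k$, using only the three-term recursion \cref{eq:recursion-def}.

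\textbf{Base cases.} For $k=0$ the claim reads $\Delta_n=-\Delta_n$, which holds because $\Delta_n=0$. For $k=1$ (assuming $n\geq 2$, so that $1<n$), the recursion with index $n+1$ gives
\[
\Delta_{n+1}=x\Delta_n-\Delta_{n-1}=-\Delta_{n-1}.
\]

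\textbf{Inductive step.} Let $2\leq k<n$ and suppose the claim holds for $k-1$ and $k-2$. Applying the recursion at index $n+k$, then the two inductive hypotheses, gives
\[
\Delta_{n+k}=x\Delta_{n+k-1}-\Delta_{n+k-2}
=-x\Delta_{n-k+1}+\Delta_{n-k+2}.
\]
Since $n-k+2\geq 2$, the recursion at index $n-k+2$ yields
\[
\Delta_{n-k+2}=x\Delta_{n-k+1}-\Delta_{n-k}.
\]
Substituting, the $x\Delta_{n-k+1}$ terms cancel and we are left with $\Delta_{n+k}=-\Delta_{n-k}$. This step is valid precisely because $n-k\geq 0$, that is, $k\leq n$, and the hypothesis $k<n$ guarantees this.

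The argument contains no real obstacle. The only care needed is bookkeeping: make sure every index at which the recursion is applied is at least $2$, and treat the degenerate cases $n=0$ and $n=1$ (where the range of $k$ is small) by direct inspection. An alternative check, valid when $\Delta_n$ is replaced by the evaluation at $2\cos\frac{\pi}{n+1}$, is the trigonometric identity \cref{eqn:chebychev-trig}. With $y=\pi/(n+1)$ one has $\sin(n+k+1)y=-\sin(n+1-k)y$, which recovers the same sign relation. The inductive argument is preferable, however, because it works in the quotient ring itself.
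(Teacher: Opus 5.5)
Your proof is correct and is essentially the paper's argument: a two-step induction on $k$ using only the three-term recursion, with the same base cases $k=0$ and $k=1$. The only cosmetic difference is that the paper derives the inductive step by computing $\Delta_1\Delta_{n-k}$ in two ways, whereas you substitute the inductive hypotheses into the recursion at index $n+k$ and then apply it again at index $n-k+2$.
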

\begin{proof}
    Since $\Delta_n=0$, the $k=0$ case is trivial and the $k=1$ case follows from $\Delta_{n+1} = \Delta_1\Delta_n-\Delta_{n-1}=-\Delta_{n-1}$.
    Now suppose that the statement holds for $\Delta_{n+k}$ and $\Delta_{n+(k-1)}$, then \begin{align*}-\Delta_{n-k-1}-\Delta_{n-k+1}&=-\Delta_1\Delta_{n-k}=\Delta_1\Delta_{n+k}\\
    &=\Delta_{n+k+1}+\Delta_{n+k-1}=\Delta_{n+k+1}-\Delta_{n-k+1}.
    \end{align*}
    It follows that $\Delta_{n+k+1}=-\Delta_{n-k-1}$ and the result holds by induction.
\end{proof}
\begin{Definition}
    For all $n\geq 0$, we define the ring
    \[R_n:=\Z[x]/(\Delta_n(x)).\]
    It is a (commutative) fusion ring with $\Z$-basis $\mathfrak{B}=\{\Delta_k\}_{0\leq k \leq n-1}$ and involution of the basis elements $(-)^*$ given by the identity. Using \cref{lem:negatives} and \cref{eq:multiplication}, the multiplication of the basis elements is given by
    \begin{equation}\label{eq:multiplication-2}
    \Delta_i\Delta_j= \begin{cases}
        \Delta_{|i-j|}+\Delta_{|i-j|+2}+\dots+\Delta_{i+j}, \text{ if } i+j\leq n \\
        \Delta_{|i-j|}+\Delta_{|i-j|+2}+\dots+\Delta_{2n-(i+j)-2} , \text{ if } i+j> n.
        \end{cases}
    \end{equation}
    We call this the \textbf{Verlinde fusion ring} (of $\mathfrak{sl_2}$ at level $(n-1)$) \cite[Definition 3.1]{MR1333750}. 
    Moreover, we also define 
    \[
    R_n^\e \coloneqq \Span_\Z\left\{\Delta_{2k}\,|
    \,0\leq k\leq \floor*{\frac{n-1}{2}}\right\}\subseteq R_n
    \]
    to be the subring generated by the even-labelled basis elements, which we call the \textbf{even part} of the Verlinde fusion ring.
    It follows immediately from the multiplication rule \cref{eq:multiplication-2} that $R_n^\e$ is a fusion subring of $R_n$.
\end{Definition}
Using \cref{eqn:chebychev-trig}, we see that the ring homomorphism $R_n \to \R$ defined by sending $x \mapsto \Delta_1(2 \cos(\pi/(n+1)))$ is well-defined, and moreover $\Delta_i(2 \cos(\pi/(n+1)) > 0$ for all $0 \leq i < n$.
The uniqueness property of $\FP$ (see \cref{def:FPdim}) implies that this completely determines $\FP: R_n \to \R$, so that
\begin{equation} \label{eq:FPdim-verlinde}
    \FP(\Delta_i) = \Delta_i(2 \cos(\pi/(n+1))
\end{equation}
for all $0 \leq i \leq n-1$.
\begin{Proposition}\label{prop:R-even-delta-2}
    Any element in $R_n^\e$ can be expressed as a polynomial in $\Delta_2$, with coefficients in $\Z$.
    In particular, $R_n^\e$ is generated by $\Delta_2$ as a ring.
\end{Proposition}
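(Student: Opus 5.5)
It suffices to show that each basis element $\Delta_{2k}$ of $R_n^\e$, for $0\leq k\leq \floor*{\frac{n-1}{2}}$, lies in the subring $\Z[\Delta_2]\subseteq R_n$ generated by $\Delta_2$. Since $R_n^\e$ is the $\Z$-span of these elements, this gives the first claim. The second claim then follows immediately, because $\Delta_2\in R_n^\e$ and $R_n^\e$ is a subring.

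I will work first in $\Z[x]$, before passing to the quotient, and prove by strong induction on $k$ that $\Delta_{2k}(x)=p_k(\Delta_2(x))$ for some $p_k\in\Z[t]$. The base cases are $p_0=1$ and $p_1=t$. For the inductive step I use \cref{eq:multiplication} with $i=2$ and $j=2k$, $k\geq1$:
\[
\Delta_2\Delta_{2k}=\Delta_{2k-2}+\Delta_{2k}+\Delta_{2k+2}.
\]
Rearranging gives
\[
\Delta_{2k+2}=(\Delta_2-1)\Delta_{2k}-\Delta_{2k-2},
\]
so we may set $p_{k+1}=(t-1)p_k-p_{k-1}$, which again has integer coefficients. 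This identity holds in $\Z[x]$, so it remains valid after applying the quotient map $\Z[x]\to R_n$, which is a ring homomorphism. Hence every $\Delta_{2k}$ in $R_n$, in particular each basis element of $R_n^\e$, is an integer polynomial in $\Delta_2$.

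The only care needed is to avoid using the truncated rule \cref{eq:multiplication-2} inside $R_n$, where the case $i+j>n$ would complicate the recursion. Working in $\Z[x]$ first and then reducing sidesteps this, so there is no real obstacle. Alternatively, one could appeal to \cref{prop:odd-even-delta}: each $\Delta_{2k}$ is a polynomial in $x^2=\Delta_2+1$. This immediately gives integer coefficients in $\Delta_2$ as well.
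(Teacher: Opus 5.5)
Your proof is correct. Your main argument differs from the paper's, whose proof is the alternative you sketch in your last two sentences. The paper argues from \cref{prop:odd-even-delta}: for $2k\le n-1$, $\Delta_{2k}$ is its own reduced representative in $R_n$ and contains only even powers of $x$. So it is an integer polynomial in $x^2=\Delta_2+1$, hence in $\Delta_2$.

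Your primary route instead uses the fusion rule $\Delta_2\Delta_{2k}=\Delta_{2k-2}+\Delta_{2k}+\Delta_{2k+2}$ from \cref{eq:multiplication}. This gives the three-term recursion $p_{k+1}=(t-1)p_k-p_{k-1}$ in $\Z[x]$, which you then push through the quotient map. The paper's version is shorter and needs only the elementary parity statement. Yours relies on the product formula \cref{eq:multiplication}, which the paper states without proof, but it produces the polynomials $p_k$ explicitly. It also uses only the structure constants of the even basis, not the presentation via $x$.

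Your caution about the truncated rule \cref{eq:multiplication-2} is harmless but unnecessary. In the range $2k+2\le n-1$ that you actually need, $2+2k<n$, so the untruncated rule already holds in $R_n$. You could therefore run the same induction directly inside $R_n^\e$.
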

\begin{proof}
    By \cref{prop:odd-even-delta}, for all $0\leq k\leq \floor{\frac{n-1}{2}}$, the unique coset representative of $\Delta_{2k}\in R_n^\e$ with degree less than $n$ is a polynomial over $\Z$ with only even powers of $x$, i.e.\ it is a polynomial in $x^2$ with coefficients in $\Z$. Since $x^2=\Delta_2+1$, we get that $\Delta_{2k}$ is indeed a polynomial in $\Delta_2$ with coefficients in $\Z$. 
    The result follows from the fact that the elements $\Delta_{2k}$ form a basis for $R_n^\e$.
    %As the result holds on the $\Z_+$-basis of $R_n^\e$, it extends to all of $R_n^\e$ by $\Z$-linearity. 
\end{proof}
\subsection{Irreducible \texorpdfstring{$\Z_+$}{non-negative Z}-Modules of \texorpdfstring{$R_{h-1}$}{Verlinde fusion rings}.}
We now recall the classification of $\Z_+$-modules over the Verlinde fusion ring $R_{h-1}$.
\begin{Definition} \label{def:VerlindeZ+module}
    Let $\Gamma$ be an ADE Dynkin diagram (see \cref{fig:CoxDyn}) whose corresponding Coxeter system is rank $n$ and has Coxeter number $h$.
    Since $\Gamma$ is a graph (without labels), and we let $Ad_\Gamma$ denote its adjacency matrix.
    Let 
    \[
    M_\Gamma \coloneqq \Span_\Z\{\alpha_1,\ldots,\alpha_n\} \subset V_\Gamma
    \]
    be the lattice in $V_\Gamma \coloneqq \Span_\R\{\alpha_1,\ldots,\alpha_n\}$.
    We define $M_\Gamma$ to be the $\Z_+$-module over $R_{h-1}$ given by the ring homomorphism $R_{h-1} \to \End_\Z(M_\Gamma)$ defined by $\Delta_1 \mapsto Ad_\Gamma = 2I_n-\mathfrak{A}$, where $\mathfrak{A}$ is the Cartan matrix from \cref{def:symmetricCartan}.
\end{Definition}
The fact that each of these defines an irreducible $\Z_+$-module is given in \cite{MR1333750,MR1976459}.
Moreover, these are exactly the $\Z_+$-modules which are Grothendieck groups of module categories over fusion categories associated to $U_q(\mathfrak{sl}_2)$ (whose fusion rings are the Verlinde fusion rings).
\begin{Remark}
    We note that there is a slight calculation error in the proof for the type D case in \cite[Theorem 3.4]{MR1333750} (eq.\ (3.2) is wrong), though the fact that these arise as Grothendieck groups of module categories bypasses this issue. 
    A fix of this error can also be found in \cite[Chapter 4]{Boyle_hons}.
    We also note that there is another ``tadpole'' series T which is part of the classification of $\Z_+$-modules over $R_{h-1}$, though these do not arise as Grothendieck groups of module categories; see \cite{MR1976459} for more details.
\end{Remark}

\section{Main theorem} \label{sec:mainthm}
Henceforth, we fix $\Gamma$ to be an ADE Dynkin diagram with its set of vertices $\Gamma_0 = \indexset = \{1,2,\ldots, n\}$ ordered as in \cref{fig:CoxDyn}, so that the bifurcation of $\Gamma_0 = \Gamma_0^+ \sqcup \Gamma_0^-$ defined by \cref{eq:bifurcation} makes $\Gamma$ a bipartite graph.
We let $(W,S)$ be the corresponding irreducible Coxeter system -- so the Coxeter group $W$ is finite and has rank $n$. The Coxeter number of $(W,S)$ will be denoted by $h$.
Let $V\coloneqq V_\Gamma = \Span_\R\{\alpha_1,\ldots,\alpha_n\}$ be the standard geometric representation of $W$, with the Coxeter plane siting as a subspace of $V$ (\cref{def:Coxeterplane}).

Let $R_{h-1}$ be the Verlinde fusion ring and recall from \cref{def:VerlindeZ+module} the $\Z_+$-module $M:=M_\Gamma$ over $R_{h-1}$, which we view as a lattice $M=\Span_\Z\{\alpha_1,\ldots,\alpha_n\}$ sitting naturally sitting inside $V$.
By restricting to the even part fusion subring $R_{h-1}^\e \subseteq R_{h-1}$, $M$ is a $\Z_+$-module over $R_{h-1}^\e$ and we obtain an action of the hypergroup $R_{h-1}^\e\otimes_\Z\R$ on $M\otimes_\Z\R = V$ (see \cref{prop:Z-plus-action-hg}).
Our main theorem is as follows.
\begin{Theorem}\label{thm:main-theorem}
    The subspace of fixed points of the hypergroup $R_{h-1}^\e \otimes_\Z \R$ acting on $V =M\otimes_\Z \R$ is the Coxeter plane.
\end{Theorem}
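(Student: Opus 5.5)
We will identify the fixed-point space as an eigenspace of a single operator. The hypergroup $R_{h-1}^\e\otimes_\Z\R$ has basis $\{\Delta_{2k}/\FP(\Delta_{2k})\}$, so $v\in V$ is fixed if and only if $\Delta_{2k}\cdot v=\FP(\Delta_{2k})v$ for all $k$. By \cref{prop:R-even-delta-2}, every $\Delta_{2k}$ is an integer polynomial $p_k(\Delta_2)$. Since $\FP$ is a ring homomorphism, $\FP(\Delta_{2k})=p_k(\FP(\Delta_2))$. Hence $v$ is fixed if and only if it is an eigenvector of the action of $\Delta_2$ with eigenvalue $\FP(\Delta_2)$; the other conditions then follow automatically. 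Writing $A\coloneqq Ad_\Gamma=2I_n-\mathfrak A$, the element $\Delta_2=\Delta_1^2-1$ acts by $A^2-I_n$. By \cref{eq:FPdim-verlinde} with $n+1=h$, we have $\FP(\Delta_2)=\Delta_2(2\cos\frac{\pi}{h})=4\cos^2\frac{\pi}{h}-1$. So the fixed-point space is exactly the eigenspace of $A^2$ for the eigenvalue $4\cos^2\frac\pi h$.

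Next we compute this eigenspace. The matrix $A$ is real symmetric, so it is diagonalisable, and the eigenspace of $A^2$ for $\lambda^2$ (with $\lambda=2\cos\frac\pi h>0$) is the direct sum of the $A$-eigenspaces for $\lambda$ and $-\lambda$. By \cref{thm:Coxeter-plane-span}, both $\pm2\cos\frac\pi h$ are simple eigenvalues of $A$, with eigenvectors $u^\pm$. Therefore this eigenspace is $\Span_\R\{u^+,u^-\}$, which is the Coxeter plane of \cref{def:Coxeterplane}. This completes the argument.

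There is one possible gap. If the roles of $\Delta_1$ and $\FP$ were mismatched, a different eigenvalue $\mu$ of $A$ might satisfy $\mu^2=\lambda^2$, but this is impossible unless $\mu=\pm\lambda$. So the only real check is that the eigenvalue $\FP(\Delta_2)$ is computed correctly, i.e.\ that the module is indeed over $R_{h-1}$ and the Frobenius--Perron value corresponds to $2\cos(\pi/h)$; \cref{eq:FPdim-verlinde} handles this. As a sanity check, we can also use \cref{prop:Z-plus-action-hg}. It predicts that the fixed space is spanned by regular elements of the irreducible $\Z_+$-summands of $M$ restricted to $R^\e_{h-1}$. These summands are the spans of $\{\alpha_i\}_{i\in\Gamma_0^+}$ and $\{\alpha_i\}_{i\in\Gamma_0^-}$, since $A^2$ preserves the bipartition and is irreducible on each part because $\Gamma$ is connected. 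That gives a 2-dimensional fixed space, consistent with the above: the positive Perron vector $u^+$ splits as $u^+_++u^+_-$, and $u^-=u^+_+-u^+_-$.

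The main obstacle is the initial reduction, namely justifying that fixedness under all basis elements reduces to $\Delta_2$ alone. This is exactly what \cref{prop:R-even-delta-2} and the homomorphism property of $\FP$ give: for any $x=p(\Delta_2)$, we have $x\cdot v=p(\FP(\Delta_2))v=\FP(x)v$. Everything else is linear algebra for a symmetric matrix.
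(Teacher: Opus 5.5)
Your proof is correct, but it takes a different route from the paper's.

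\textbf{The paper's route.} It goes through \cref{prop:Z-plus-action-hg}. It restricts $M$ to $R_{h-1}^\e$ and uses the bipartition to show $M=M^+\oplus M^-$ with both summands irreducible (\cref{lem:restrictiondecomposition}). Hence the fixed space is $\Span_\R\{r^+,r^-\}$ for regular elements $r^\pm$, whose existence and uniqueness rest on the Perron--Frobenius statement \cref{prop:reg-elt-module}. It then checks in \cref{lem:reg-elt-decomp} that $r^+\pm r^-$ are $\pm\FP(\Delta_1)$-eigenvectors of $\Delta_1$. Simplicity from \cref{thm:Coxeter-plane-span} is invoked only at the very end, to identify these vectors with $u^\pm$.

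\textbf{Your route.} You collapse the fixed-point condition to the single equation $(A^2-I_n)v=\FP(\Delta_2)v$, using that $\Delta_2$ generates $R_{h-1}^\e$ and that $\FP$ is multiplicative. You then read off
\[
\ker(A^2-\lambda^2 I_n)=\ker(A-\lambda I_n)\oplus\ker(A+\lambda I_n),
\]
which holds for $\lambda\neq 0$ even without symmetry of $A$. This bypasses the bipartite decomposition, the irreducibility argument and regular elements entirely.

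\textbf{What each buys.} In your argument, simplicity of $\pm 2\cos\frac{\pi}{h}$ does double duty: it gives both the dimension of the fixed space and its identification with the Coxeter plane. In the paper, the dimension $2$ instead comes from counting irreducible summands. Your argument is shorter and purely linear-algebraic. The paper's argument gives the conceptual ``folding'' picture: the fixed points are spanned by the Perron--Frobenius vectors of the two bipartite halves, one per irreducible summand. That picture is the point of the general framework of hypergroup actions induced from $\Z_+$-modules.

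\textbf{One point to state explicitly.} The paper also leaves this implicit. The hypergroup basis is normalised using the $\FP$ of the fusion ring $R_{h-1}^\e$ itself. This agrees with the restriction of $\FP$ on $R_{h-1}$ by uniqueness of the character that is positive on the basis. Only because of this does \cref{eq:FPdim-verlinde} correctly compute the eigenvalue $\FP(\Delta_2)$ in your reduction.
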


The rest of this section is dedicated to the proof of the theorem.
\begin{Lemma} \label{lem:bifurcationofbasis}
    Let $X^+\sqcup X^-$ be the bipartition of the $\Z$-basis of $M$ induced by the bifurcation of $\Gamma_0$, so that $X^+ = \{ \alpha_i \mid i \in \Gamma_0^+ \}$ and $X^- = \{ \alpha_i \mid i \in \Gamma_0^- \}$. Then $\Delta_1 \cdot(\Span_\Z X^\pm)\subseteq \Span_\Z X^\mp$.
\end{Lemma}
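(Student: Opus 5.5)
By \cref{def:VerlindeZ+module}, $\Delta_1$ acts on $M$ through the adjacency matrix $Ad_\Gamma$. Concretely, for each basis vector,
\[
\Delta_1\cdot\alpha_i=\sum_{j\in\Gamma_0}(Ad_\Gamma)_{ji}\,\alpha_j=\sum_{j:\{i,j\}\in\Gamma_1}\alpha_j .
\]
The image of $\alpha_i$ is therefore the sum of the basis vectors attached to the neighbours of $i$ in $\Gamma$. By linearity it suffices to check the claim on basis vectors: for $i\in\Gamma_0^+$ we need every neighbour $j$ of $i$ to lie in $\Gamma_0^-$, and symmetrically for $i\in\Gamma_0^-$.

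The only real input is that the partition \cref{eq:bifurcation} is a genuine bipartition of $\Gamma$, i.e.\ no edge joins two vertices of the same class. If $\{i,j\}\in\Gamma_1$, the triangle inequality for the graph metric gives $|d_\Gamma(i,1)-d_\Gamma(j,1)|\le 1$. Because $\Gamma$ is a tree, the two distances cannot be equal: if they were, the unique paths from $1$ to $i$ and from $1$ to $j$ together with the edge $\{i,j\}$ would contain a cycle. More directly, in a tree one of $i,j$ lies on the unique path from $1$ to the other, so the distances differ by exactly $1$. Hence $d_\Gamma(i,1)$ and $d_\Gamma(j,1)$ have opposite parity, and $i$ and $j$ lie in different classes.

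Combining the two steps gives $\Delta_1\cdot\alpha_i\in\Span_\Z X^{\mp}$ whenever $\alpha_i\in X^{\pm}$. Extending $\Z$-linearly yields $\Delta_1\cdot(\Span_\Z X^\pm)\subseteq\Span_\Z X^\mp$. There is no real obstacle; the only point needing care is justifying that adjacent vertices of a tree have distances to $1$ differing by exactly one. This fact is already implicitly asserted in the text preceding \cref{def:distinguishCoxelm}.
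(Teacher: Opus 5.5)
Your proof is correct and is essentially the paper's argument: both reduce the claim to the matrix $Ad_\Gamma=2I_n-\mathfrak{A}$ of $\Delta_1$ being block anti-diagonal with respect to $X^+\sqcup X^-$, which is exactly the bipartiteness of $\Gamma$ under the bifurcation of $\Gamma_0$. The paper reads the vanishing entries off the bilinear form and simply asserts bipartiteness, whereas you work with the adjacency matrix directly (so the zero diagonal is automatic) and also supply a correct tree argument that adjacent vertices have distances to $1$ differing by exactly one.
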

\begin{proof}
    The action of $\Delta_1$ on $M$ is defined by the matrix $Ad_\Gamma = 2I_n-\mathfrak{A}$, so it suffices to show that $2I_n-\mathfrak{A}$ is block anti-diagonal with respect to the basis partition $X^+ \sqcup X^-$.
    Let $x\neq x'\in X^+$ and $y\neq y'\in X^-$.
    By the bipartite property, we get that $\langle \,x\mid x'\,\rangle=0$ and $\langle \,y\mid y'\,\rangle=0$. Furthermore, $\langle\,x\mid x\,\rangle=\langle\,y\mid y\,\rangle=-2$, so the entries of $2I_n-\mathfrak{A}$ corresponding to $(x,x')$ and $(y,y')$ are zero for all $x,x'\in X^+$ and $y,y'\in X^-$. This shows that $2I_n-\mathfrak{A}$ is block anti-diagonal, as required.
\end{proof}
\begin{Lemma}\label{lem:restrictiondecomposition}
    View $M$ as a $\Z_+$-module over $R_{h-1}^\e \subseteq R_{h-1}$.
    Then $M$ decomposes into a direct sum of two irreducible $\Z_+$-modules over $R_{h-1}^\e$
    \begin{equation}\label{eq:direct-sum}M=M^+\oplus M^-,\end{equation}
    with $M^{\pm}$ induced by the bifurcation in \cref{lem:bifurcationofbasis}, i.e.\ $M^{\pm} = \Span_\Z X^\pm$.
\end{Lemma}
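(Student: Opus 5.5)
We need to show two things about $M$ as a $\Z_+$-module over $R_{h-1}^\e$: that $M^+$ and $M^-$ are $\Z_+$-submodules, and that each of them is irreducible. The decomposition \cref{eq:direct-sum} then holds at the level of $\Z_+$-modules, since $X^+\sqcup X^-$ partitions the basis of $M$.

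\emph{Closure.} By \cref{prop:R-even-delta-2}, every element of $R_{h-1}^\e$ is a $\Z$-polynomial in $\Delta_2 = \Delta_1^2 - 1$. By \cref{lem:bifurcationofbasis}, $\Delta_1$ maps $\Span_\Z X^\pm$ into $\Span_\Z X^\mp$, so $\Delta_1^2$ preserves each of $\Span_\Z X^\pm$. Hence $\Delta_2$, and therefore every element of $R_{h-1}^\e$, preserves $M^+$ and $M^-$. Coefficients of basis elements acting remain in $\Z_+$, since they are restrictions of the $R_{h-1}$-action. So $M^\pm$ are $\Z_+$-submodules, and $M = M^+\oplus M^-$ as $\Z_+$-modules over $R_{h-1}^\e$.

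\emph{Irreducibility.} This is the main step. Suppose $\emptyset\neq K\subseteq X^+$ spans a $\Z_+$-submodule over $R_{h-1}^\e$, and take $\alpha_i\in K$. I would show that every $\alpha_j\in X^+$ appears with positive coefficient in $b\cdot\alpha_i$ for some basis element $b=\Delta_{2k}$; this forces $K=X^+$.

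Use the matrix $[\Delta_2] = Ad_\Gamma^2 - I$. Its off-diagonal entries agree with those of $Ad_\Gamma^2$, and entry $(j,i)$ of $Ad_\Gamma^2$ counts paths of length $2$ from $i$ to $j$ in $\Gamma$. Consider the graph on $\Gamma_0^+$ in which $i\sim j$ when $d_\Gamma(i,j)=2$. Because $\Gamma$ is a connected bipartite tree, any two vertices of $\Gamma_0^+$ are joined by a path in $\Gamma$ whose vertices alternate between $\Gamma_0^+$ and $\Gamma_0^-$. Taking every second vertex of that path shows this distance-two graph is connected. So the off-diagonal part of $[\Delta_2]|_{M^+}$ is the adjacency matrix of a connected graph with nonnegative integer entries.

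Now let $j\in\Gamma_0^+$ with $d_\Gamma(i,j)=2$. The coefficient of $\alpha_j$ in $\Delta_2\cdot\alpha_i$ is positive, so $\alpha_j\in K$. Iterating along the connected distance-two graph gives $K=X^+$. Only the basis element $\Delta_2$ is used, which lies in $R_{h-1}^\e$ provided $h-1>2$, i.e.\ $h\geq 4$.

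\emph{Small cases.} If $h\leq 3$, then $\Gamma$ is $A_1$ or $A_2$, and each of $X^\pm$ has at most one element, so irreducibility is trivial. The same argument applies verbatim to $X^-$, completing the proof.
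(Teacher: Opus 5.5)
Your proof is correct and is essentially the paper's: closure of $M^\pm$ comes from \cref{prop:R-even-delta-2}, $\Delta_2=\Delta_1^2-\Delta_0$ and \cref{lem:bifurcationofbasis}, and irreducibility comes from counting walks with the adjacency matrix. The difference is cosmetic: the paper applies $\Delta_1^{2k}=(\Delta_2+\Delta_0)^k$ once along an even-length path from $x$ to $x'$, whereas you propagate membership in $K$ one distance-two step at a time using $\Delta_2$ alone, and you also treat $A_1$ and $A_2$ separately, which the paper leaves implicit.
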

\begin{proof}
    Any element in $R_{h-1}^\e$ can be expressed as a polynomial in $\Delta_2 \in R_{h-1}$ with coefficients in $\Z$ by \cref{prop:R-even-delta-2},
    so to show that $M^\pm$ are $R_{h-1}^\e$-modules, we only need to show closure under $\Delta_2$.
    We already have that $\Delta_1\cdot M^\pm \subseteq M^\mp$ by the lemma before, so $\Delta_1^2\cdot M^\pm\subseteq M^\pm$. Sinc $\Delta_0=1$ acts trivially, we have $\Delta_2\cdot M^\pm=(\Delta_1^2-\Delta_0)\cdot M^\pm\subseteq M^\pm$ and so $M^\pm$ are $\Z_+$-modules over $R_{h-1}^\e$.
    
    We now prove irreducibility. For all $x\neq x'\in X^+$, there is a path of even length $2k$ from $x$ to $x'$ on the Coxeter graph by construction of the bifurcation (see \cref{eq:bifurcation}). Thus, as $\Delta_1$ acts on $M$ via the adjacency matrix of the Coxeter graph, the $(x,x')$-entry of the matrix corresponding to the action of $\Delta_1^{2k}=(\Delta_2+\Delta_0)^k\in R_{h-1}^\e$ encodes exactly the number of walks from $x$ to $x'$ on the graph and is thus non-zero, so any non-zero $\Z_+$-submodule of $M^+$ must contain the entire module $M^+$. The same argument applies for $M^-$, replacing $X^+$ with $X^-$.  
\end{proof}

\begin{Lemma}\label{lem:reg-elt-decomp}
    Fix $r\in M \otimes _\Z\R $ to be a regular element of $M$ with $M$ viewed as an irreducible $\Z_+$-module over $R_{h-1}$. Let $r=r^++r^-$ be the decomposition under the direct sum
    \[M\otimes _\Z\R =(M^+\otimes _\Z\R )\oplus (M^-\otimes _\Z\R )\]
    induced by the decomposition in \cref{lem:restrictiondecomposition} via extending scalars to $\R$.
    Then $r^\pm$ are regular elements of the irreducible $\Z_+$-modules $M^\pm$ over $R_{h-1}^\e$ respectively, and 
    \begin{equation} \label{eq:delta1onsumanddifference}
        \begin{split}
            \Delta_1\cdot (r^++r^-)&=\FP(\Delta_1)(r^++r^-) \\
            \Delta_1\cdot (r^+-r^-)&=-\FP(\Delta_1)(r^+-r^-)
        \end{split}
    \end{equation}
\end{Lemma}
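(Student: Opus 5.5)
The plan is to use the parity structure from \cref{lem:bifurcationofbasis} together with the defining property of the regular element $r$ from \cref{prop:reg-elt-module}. Since $r$ is regular for $M$ over $R_{h-1}$, it has strictly positive coefficients and satisfies $x\cdot r=\FP(x)r$ for all $x\in R_{h-1}$. Here $\FP$ denotes the Frobenius--Perron dimension of $R_{h-1}$.

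First, I check that $r^\pm$ are regular elements of $M^\pm$. The coefficients of $r^\pm$ are just the coefficients of $r$ on the basis vectors in $X^\pm$, so they are strictly positive. Next, take any $y\in R_{h-1}^\e$. By \cref{lem:restrictiondecomposition}, $y\cdot r^\pm\in M^\pm\otimes_\Z\R$. Comparing components in
\[
y\cdot r^+ + y\cdot r^- = y\cdot r = \FP(y)r^+ + \FP(y)r^-
\]
under the direct sum gives $y\cdot r^\pm=\FP(y)r^\pm$.

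One point needs checking: the Frobenius--Perron dimension of the fusion subring $R_{h-1}^\e$ must agree with the restriction of $\FP$ from $R_{h-1}$. This follows from the uniqueness statement in \cref{def:FPdim}. The restriction is a ring homomorphism $R_{h-1}^\e\to\R$ that is positive on the basis $\{\Delta_{2k}\}$, so it is the Frobenius--Perron dimension of $R_{h-1}^\e$. Also, $r^\pm\neq 0$ because $X^\pm$ is nonempty for $n\ge 2$. For $A_1$ the set $X^-$ is empty; I would either remark on this separately or treat $M^-=0$ as trivial. Given irreducibility of $M^\pm$ from \cref{lem:restrictiondecomposition}, \cref{prop:reg-elt-module} then identifies $r^\pm$ as regular elements.

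For the identities in \cref{eq:delta1onsumanddifference}, the first is just $\Delta_1\cdot r=\FP(\Delta_1)r$. For the second, \cref{lem:bifurcationofbasis} gives $\Delta_1\cdot r^\pm\in M^\mp\otimes_\Z\R$. Projecting the first identity onto the two summands, the $M^-$-component of $\Delta_1\cdot(r^++r^-)$ is $\Delta_1\cdot r^+$. Hence
\[
\Delta_1\cdot r^+=\FP(\Delta_1)r^-, \qquad \Delta_1\cdot r^-=\FP(\Delta_1)r^+.
\]
Subtracting these yields $\Delta_1\cdot(r^+-r^-)=-\FP(\Delta_1)(r^+-r^-)$.

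The main subtlety, rather than a real obstacle, is the compatibility of Frobenius--Perron dimensions between $R_{h-1}^\e$ and $R_{h-1}$, which the uniqueness statement settles.
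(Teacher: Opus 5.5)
Your proposal is correct and follows the paper's proof essentially step for step. You compare components of $y\cdot r$ for $y\in R_{h-1}^\e$ under the direct sum and inherit positivity of coefficients. You then use that $\Delta_1$ swaps $M^\pm$ to split $\Delta_1\cdot r=\FP(\Delta_1)r$ into $\Delta_1\cdot r^\pm=\FP(\Delta_1)r^\mp$, and subtract. Your check that the restriction of $\FP$ to $R_{h-1}^\e$ is its own Frobenius--Perron dimension (via the uniqueness in \cref{def:FPdim}), and your remark on the degenerate $A_1$ case where $M^-=0$, are points the paper leaves implicit.
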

\begin{proof}
    For all $a\in R_{h-1}^\e$,  we have
    \[
    a\cdot r^+ + a\cdot r^-= a\cdot r = \FP(a)r = \FP(a)r^++\FP(a)r^-.
    \]
    By the direct sum decomposition $M=M^+\oplus M^-$ and the closure of $M^\pm$ under $R_{h-1}^\e$-action, we further deduce that $a\cdot r^\pm=\FP(a)r^\pm$. Note that since $r$ has positive coefficients in the basis $X^+\sqcup X^- = \{\alpha_i\}_{i=1}^n$, $r^\pm$ will have positive coefficients in $X^\pm$ as well, so
    that $r^\pm$ is a regular element of $M^\pm$ respectively. 

    The first equation in \cref{eq:delta1onsumanddifference} is immediate from $r = r^+ + r^-$ being a regular element of $M$ as a $\Z_+$-module over $R_{h-1}$.
    For the final equation, since $\Delta_1\cdot M^\pm\subseteq M^\mp$ by \cref{lem:bifurcationofbasis} and we have the direct sum decomposition $M=M^+\oplus M^-$, the following equation
    \[
    \Delta_1\cdot r^+ + \Delta_1 \cdot r^-= \Delta_1\cdot r = \FP(\Delta_1)r=\FP(\Delta_1)r^- + \FP(\Delta_1)r^+
    \]
    further implies
    \[\Delta_1\cdot r^+=\FP(\Delta_1)r^-\quad\text{and}\quad \Delta_1\cdot r^-=\FP(\Delta_1)r^+.\]
    This combines to give the final desired equation:
    \[\Delta_1\cdot (r^+-r^-)=\FP(\Delta_1) r^--\FP(\Delta_1)r^+=-\FP(\Delta_1)(r^+-r^-).\]
\end{proof}
Now we have all the tools to prove our main theorem.

\begin{proof}[Proof of \cref{thm:main-theorem}]
    Choose $r \in M$ to be a regular element of $M$ with $M$ viewed as an irreducible $\Z_+$-module over $R_{h-1}$.
    Now view $M$ as a $\Z_+$-module over the fusion subring $R_{h-1}^\e$, so that $M$ decomposes into $M^+\oplus M^-$ with each $M^\pm$ irreducible over $R_{h-1}^\e$.
    Following \cref{lem:reg-elt-decomp}, let $r=r^++r^-$ be the corresponding decomposition obtained from 
    \[
    M\otimes_\Z \R = (M^+\otimes_\Z \R) \oplus (M^-\otimes_\Z \R),
    \]
    with $r^\pm$ regular elements of $M^\pm$ over $R_{h-1}^\e$ respectively.
    By \cref{prop:Z-plus-action-hg}, the subspace of fixed points of the hypergroup $R_{h-1}^\e\otimes_\Z\R$ acting on $V = M\otimes_\Z\R$ is given by $\Span_\R\{r^+,r^-\}$.

    Clearly, $\Span_\R\{r^+,r^-\} = \Span_\R\{r^++r^-,r^+-r^-\}$.
    We shall prove that this is the Coxeter plane by showing that there exists $c,c' \in \R$ such that $r^+ + r^- = cu^+$ and $r^+-r^- = c'u^-$, with $u^\pm$ as given in \cref{thm:Coxeter-plane-span}, which are the elements spanning the Coxeter plane (\cref{def:Coxeterplane}).
    Recall that by definition, that the matrix of $\Delta_1$ acting on $M$ is given by $2I_n-\mathfrak A$. Moreover, by \cref{eq:FPdim-verlinde}, we have $\FP(\Delta_1)=2\cos(\frac{\pi}{h})$. By \cref{lem:reg-elt-decomp}, we get that $r^+ \pm r^-$ are $\pm2\cos(\frac{\pi}{h})$-eigenvectors respectively for $2I_n-\mathfrak A$.
    Since the eigenvalues $\pm2\cos(\frac{\pi}{h})$ of $2I_n-\mathfrak A$ are simple by \cref{thm:Coxeter-plane-span}, we get that $r^+ \pm r^-$ is a scalar multiple of $u^\pm$ respectively, as required.
\end{proof}

\bibliographystyle{plain} % We choose the "plain" reference style
\bibliography{bib} % Entries are in the refs.bib file

\end{document}